\title{Bi-orders do not arise from total orders}
\author{Samuel M. Corson}
\theoremstyle{definition}\newtheorem{theorem}{Theorem}
\theoremstyle{definition}
\theoremstyle{definition}
\theoremstyle{definition}
\theoremstyle{definition}\newtheorem{bigtheorem}{Theorem}
\numberwithin{theorem}{section}
\theoremstyle{definition}
\theoremstyle{definition}\newtheorem{proposition}[theorem]{Proposition}
\theoremstyle{definition}
\theoremstyle{definition}\newtheorem{question}[theorem]{Question}
\theoremstyle{definition}
\theoremstyle{definition}
\theoremstyle{definition}
\theoremstyle{definition}\newtheorem{lemma}[theorem]{Lemma}
\theoremstyle{definition}
\theoremstyle{definition}
\theoremstyle{definition}\newtheorem{definitions}[theorem]{Definitions}
\theoremstyle{definition}
\theoremstyle{definition}
\theoremstyle{definition}\newtheorem{construction}[theorem]{Construction}
\newcommand{\Aut}{\operatorname{Aut}}
\newcommand{\fix}{\operatorname{fix}}
\newcommand{\stab}{\operatorname{stab}}
\newcommand{\At}{\mathcal{A}}
\newcommand{\G}{\mathcal{G}}
\newcommand{\F}{\mathbb{F}}
\newcommand{\AC}{\textbf{AC}}
\newcommand{\LG}{\textbf{LG}}
\newcommand{\ZF}{\textbf{ZF}}
\newcommand{\ZFA}{\textbf{ZFA}}
\newcommand{\Ord}{\operatorname{Ord}}
\newcommand{\Mon}{\operatorname{Mon}}
\newcommand{\Len}{\operatorname{Len}}
\newcommand{\Ho}{\mathcal{H}}
\begin{document}

\address{Instituto de Ciencias Matem\'aticas CSIC-UAM-UC3M-UCM, 28049 Madrid, Spain.}

\email{sammyc973@gmail.com}
\keywords{left-orderable group, bi-orderable group, strongly bounded group, locally indicable group}
\subjclass[2010]{03E25, 06F15, 06F20}
\thanks{This work was supported by the Severo Ochoa Programme for Centres of Excellence in R\&D SEV-20150554.}

\begin{abstract}  We present some Zermelo-Fraenkel consistency results regarding bi-orderability of groups, as well as a construction of groups with Conradian orders whose every action on metric spaces has bounded orbits.  A classical consequence of the ultrafilter lemma is that a group is bi-orderable if and only if it is locally bi-orderable.  We show that there exists a model of ZF in which there is a group which is locally free (ergo locally bi-orderable) and not bi-orderable, and the group can be given a total order.  Such a group can also exist in the presence of the principle of dependent choices.  Comparable consistency results are provided for torsion-free abelian groups.
\end{abstract}

\maketitle

\begin{section}{Introduction}

The goals of this paper are to explore the set theoretic strength of bi-orderability in the setting of Zermelo-Fraenkel set theory and also to construct unusual locally indicable groups.  Let $\ZF$ denote Zermelo-Fraenkel set theory minus $\AC$, the axiom of choice.  Recall that a \emph{total order} on a set $X$ is a binary relation $<$ for which exactly one of  $x < y$ or $y < x$ holds for distinct $x, y\in X$, $x < x$ is false for all $x\in X$, and $x < y$ and $y < z$ imply $x < z$.

If $G$ is a group we say that a total order $<$ on $G$ is a \emph{left-order} (respectively \emph{right-order}) provided  for all $g, h, k\in G$ we have that $g < h$ implies $kg < kh$ (resp. $gk < hk$).  We say $G$ is \emph{left-orderable} provided there exists a left order on $G$.  One could similarly define \emph{right-orderable} but since a left-order explicitly defines a right-order and vice-versa, questions of left- or right-orderability of a group are equivalent.  Left-orderable groups are torsion-free.  A group order is a \emph{bi-order} if it is both a left- and right-order and a group is \emph{bi-orderable} provided such an order exists.

The ultrafilter lemma (every filter on a set extends to an ultrafilter) implies the classically known local-to-global bi-orderability result (see \cite[Proposition 1.4]{N}):

\begin{center} \emph{A group $G$ is bi-orderable if and only if every finitely generated subgroup is bi-orderable.}
\end{center}

In a nice setting one can have explicit bi-orders without having recourse to this local-to-global theorem.  Given a total order on a set $X$ one immediately obtains a bi-order on the free abelian group $F_{ab}(X)$ generated by $X$ by considering the lexicographic order, and a bi-order on a free abelian group restricts to a total order on the free set of generators.  Importantly the assertion that every set can be given a total order cannot be proved from $\ZF$, so a total order on an arbitrary set $X$ does not exist a priori.  Thus in $\ZF$ a free abelian group is left orderable if and only if a free set of generators can be given a total order.  By a more elaborate argument, in $\ZF$ a free group is bi-orderable if and only if a free generating set has a total order \cite{M}.  It seems natural to ask whether in $\ZF$ a total order on a locally free, or a torsion-free abelian, group implies bi-orderbility.  It does not, by the following theorem.

\begin{bigtheorem}\label{biorderable}  If $\ZF$ is consistent then there exists a model of $\ZF$ in which the following hold:
\begin{enumerate}

\item  There exists a group $\G$ such that

\begin{enumerate}

\item $\G$ is locally free;

\item there is an increasing sequence $\{\G_n\}_{n\in \omega}$ of retract subgroups which are each bi-orderable with $\bigcup_{n\in \omega} \G_n = \G$;

\item $\G$ can be given a total order;

\item $\G$ is not bi-orderable.

\end{enumerate}
\item  There exists an abelian group $\At$ such that

\begin{enumerate}
\item $\At$ is torsion-free;

\item there is an increasing sequence $\{\At_n\}_{n\in \omega}$ of retract subgroups which are each bi-orderable with $\bigcup_{n\in \omega} \At_n = \At$;

\item $\At$ can be given a total order;

\item $\At$ is not bi-orderable.
\end{enumerate}

\end{enumerate}
\end{bigtheorem}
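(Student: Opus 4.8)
\emph{Proof proposal.} The plan is to prove each of the two parts inside a permutation (Fraenkel--Mostowski) model of $\ZFA$ and then transport it to a model of $\ZF$ by the Jech--Sochor embedding theorem. I will describe the group $\G$ of part (1) in some detail; the abelian group $\At$ of part (2) will be produced by exactly the same recipe with free abelian groups and direct sums in place of free groups and free products, so that e.g.\ a bi-order of $\At$ restricts to a bi-order of each summand just as a bi-order of $\G$ restricts to each free-product factor.

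First I would fix the atoms and the permutation group. Take the atoms to be a disjoint union $\bigsqcup_{n\in\omega}A_n$ of countably many ``parcels'', each parcel $A_n$ carrying the structure of a densely ordered set (a copy of $\mathbb{Q}$), and let $G$ consist of those permutations of the atoms that preserve each parcel and act on it by order-automorphisms, with supports taken to be finite sets of atoms. Two features are built in. On one hand the dense orders on the parcels let us linearly order, \emph{inside the model}, every object assembled from the atoms in a parcel-wise coherent way; this is what will give (1c) and (2c). On the other hand, no finite support can ``see'' infinitely many parcels at once, and it is exactly this spreading-out that will obstruct a global bi-order while leaving each finite stage untouched.

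Next I would build $\G$ as an increasing union $\G=\bigcup_{n}\G_n$. Inside parcel $n$ one puts a free group whose basis is a coherent copy of the ordered parcel, so that this piece is free on a linearly ordered set and hence bi-orderable; $\G_n$ is the amalgam of the first $n$ pieces, the retraction of $\G$ onto $\G_n$ that forgets the later parcels has a finite support and so lies in the model (making $\G_n$ a retract subgroup), and the embedding of $\G_n$ in $\G_{n+1}$ is ``twisted'' by the parcel data so that $\G$, while locally free --- any finitely generated subgroup meets only finitely many parcels, hence lies inside a free group --- is not free in the model. One then checks: (1a) is the locally free property just noted; (1b) records that the $\G_n$ are increasing bi-orderable retract subgroups with union $\G$; for (1c) one transports the dense orders of the parcels along a shortlex-type order on the amalgam; and for (1d) one argues that a bi-order on $\G$ would restrict to each parcel piece and thus produce, uniformly in $n$, a bi-order on the $n$th piece --- equivalently a choice function for an $\omega$-indexed family that the model has been rigged to leave choiceless --- whereas conversely any such uniform choice of bi-orders would, via the lexicographic/free-product construction (the effective content of Vinogradov's theorem that free products of bi-orderable groups are bi-orderable), reassemble into a bi-order of $\G$. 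Thus the sole obstruction to bi-orderability is this one instance of failure of choice; consistency of its failure is exactly what is at issue, and note that our model must in any case be one in which $\BPIT$ fails, since $\BPIT$ yields the classical local-to-global bi-orderability theorem.

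The step I expect to be the main obstacle is reconciling (c) with (d). The naive mechanism for making the bi-order selection fail --- a two-element, ``pair of socks'' ambiguity in each parcel --- cannot be used, because any total order on $\G$ restricts to the parcels and would resolve a two-element ambiguity, destroying (c). Hence the obstructed choice must be a choice from an infinite, highly homogeneous family, namely the space of bi-orders of a free group of rank $\geq 2$ (respectively of $\mathbb{Z}^{(\cdot)}$ in part (2)), and the linear order the model supplies must be \emph{transverse} to the group operation: it must linearly order the underlying set of each parcel piece without singling out any of its bi-orders. Pinning down a parcel structure that simultaneously (i) gives each parcel piece a bi-order with a finite support (so each $\G_n$ is genuinely bi-orderable in the model), (ii) gives the underlying set of each piece a linear order with a finite support (so (c) holds), and (iii) admits no finite support selecting a bi-order from every piece (so (d) holds), is the technical heart of the argument. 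Once the $\ZFA$ model has been arranged with these three properties, the Jech--Sochor transfer to a model of $\ZF$ is routine, and the same bookkeeping applied to the abelian construction yields $\At$.
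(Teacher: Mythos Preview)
Your outline identifies the right arena (a Fraenkel--Mostowski model, transfer to $\ZF$, a shortlex order for (c)), but the mechanism you propose for (d) does not work as stated, and the paper in fact uses a completely different device.

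You argue that a bi-order on $\G$ would restrict to a uniform family of bi-orders on the parcel pieces, and that this uniform selection is what the model forbids. But with your setup --- each parcel piece a free group on a linearly ordered set --- the Magnus construction turns a linear order on a free basis into a bi-order on the free group, \emph{uniformly and without any choice}. Since your parcels carry a canonical dense order (your copies of $\mathbb{Q}$), that canonical order already picks out a canonical bi-order on every parcel piece simultaneously, with empty support. So conditions (i) and (ii) in your own list automatically deliver the very selection you need to fail for (iii); the ``highly homogeneous space of bi-orders'' is not homogeneous under the symmetries you have imposed, because those symmetries fix the linear order and hence fix the Magnus bi-order. The tension you flag in your last paragraph is therefore not merely a technical detail to be worked out --- it is a genuine obstruction to the strategy as written.

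The paper avoids this entirely. The parcels are copies of $\mathbb{Z}$, not $\mathbb{Q}$, so there is a successor map $s$; each atom $a$ contributes \emph{two} generators $(a,0)$ and $(a,1)$; and one imposes the relations $(s(a),0)(a,1)(s(a),0)^{-1}=(s(a),1)^{-1}$. The obstruction to a bi-order is then purely algebraic combined with symmetry: in any bi-order, conjugation preserves sign, so $(a,1)$ and $(s(a),1)^{-1}$ have the same sign, i.e.\ $(a,1)$ and $(s(a),1)$ have \emph{opposite} signs; but the shift $a\mapsto s(a)$ lies in $\fix(B)$ for any finite support $B$ missing that parcel, hence must preserve the bi-order, forcing $(a,1)$ and $(s(a),1)$ to have the \emph{same} sign. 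No failure-of-choice argument is invoked. The parcel piece here is not free but only locally free (a separate lemma), and the total order in (c) comes from a confluent terminating rewriting system yielding normal forms, then shortlex. For part (2) the analogue is the relation $a\cdot s(a)^2=1$, making each parcel a copy of $\mathbb{Z}[\tfrac12]$; again a bi-order would force $a$ and $s(a)$ to have opposite signs while the shift forces them to have the same sign.
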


By a total order on a group we mean, of course, a total order on the group's underlying set.  Also, the notation $\bigcup_{n\in \omega} \G_n = \G$ is the common shorthand expressing that the underlying set for $\G$ is the union of the underlying sets of the subgroups $\G_n$.

Recall that the \emph{principle of dependent choices} is the assertion that if $R$ is a binary relation on a nonempty set $X$ for which $(\forall x\in X)(\exists y \in X)[x R y]$ then there exists a sequence $\{x_n\}_{n\in \omega}$ for which $x_n R x_{n+1}$.  This principle, which is a consequence of the axiom of choice, implies many of the standard results in analysis and also implies the axiom of countable choices.  We have the following:

\begin{bigtheorem}\label{dependentchoices} If $\ZF$ is consistent then there exists a model of $\ZF$ in which the following hold:
\begin{enumerate}

\item  There exists a group $\G$ which is locally free and can be given a total order, but $\G$ is not bi-orderable.

\item  There exists a torsion-free abelian group $\At$ which can be given a total order, but $\At$ is not bi-orderable.

\item  The principle of dependent choices.

\end{enumerate}

\end{bigtheorem}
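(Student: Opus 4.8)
The plan is to realise the model of Theorem~\ref{dependentchoices} as a symmetric extension built over a $\sigma$-closed forcing, so that it inherits both the behaviour of Theorem~\ref{biorderable} and the principle of dependent choices. I would fix a model $V$ of $\ZF+\AC$ (say $V=L$), fix a suitably structured index set $I\in V$, and let $\mathbb{P}$ be the forcing attaching the generic data of the Theorem~\ref{biorderable} construction to the members of $I$ by \emph{countable} conditions. Being $\sigma$-closed, $\mathbb{P}$ adds no new reals, so one is really manipulating generic subsets of $\omega_1$ rather than Cohen reals. Let $\mathcal{K}$ be the group of automorphisms of $\mathbb{P}$ induced by the symmetries of $I$, and let $\mathcal{F}$ be the normal filter on $\mathcal{K}$ generated by the pointwise stabilisers of \emph{countable} subsets of $I$. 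Since $V\models\AC$ a countable union of countable subsets of $I$ is again countable, so $\mathcal{F}$ is closed under countable intersections; combined with the $\sigma$-closedness of $\mathbb{P}$ this is exactly the standard sufficient condition for the symmetric submodel $\N$ of the $\mathbb{P}$-extension to satisfy the principle of dependent choices, which gives item (3). (Alternatively one could build a Fraenkel--Mostowski model of $\ZFA+\mathsf{DC}$ carrying the two groups and then invoke a choiceless transfer theorem of Pincus type; I prefer the forcing route because it keeps the closure bookkeeping explicit.)

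Inside $\N$ I would define $\G$, and then $\At$, exactly as in the proof of Theorem~\ref{biorderable}: $\G$ is a union $\G=\bigcup_{n\in\omega}\G_n$ of an increasing chain of finitely generated free groups whose free generators are attached $\mathcal{K}$-equivariantly to the generic data, so that $\mathcal{K}$ acts on $\G$ by group automorphisms, and $\At$ is the corresponding torsion-free abelian group (a subgroup or quotient of a $\mathbb{Q}$-vector space built on the same data). Local freeness of $\G$ is then immediate, since every finitely generated subgroup sits inside some $\G_n$; local freeness is automatic on the abelian side; and torsion-freeness of $\At$ is designed in. The total orders of items (1)(c) and (2)(c) are produced just as in Theorem~\ref{biorderable}, from the chain $\{\G_n\}$ (respectively $\{\At_n\}$) together with the accompanying retraction structure, and these orders are $\mathcal{K}$-invariant enough to lie in $\N$.

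The one point that is not routine is the non-bi-orderability, items (1)(d) and (2)(d). I would argue by contradiction: a bi-order $\preceq$ on $\G$ in $\N$ has a support, a countable $C\subseteq I$ whose pointwise stabiliser $\stab(C)$ fixes $\preceq$; since $\mathcal{K}$ acts by group automorphisms, $\stab(C)$ preserves the positive cone $P=\{g:1\prec g\}$, which is closed under conjugation. The construction must be arranged so that for every countable $C$ the group $\stab(C)$ contains an automorphism $\pi$ of $\G$ sending some nontrivial $g$ to a conjugate of $g^{-1}$: in a bi-order conjugate elements share a sign, so $g\in P\iff\pi(g)\in P\iff g^{-1}\in P$, contradicting $\G=P\sqcup\{1\}\sqcup P^{-1}$. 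The same scheme handles $\At$, with ``conjugate of $g^{-1}$'' replaced by the appropriate obstruction to a translation-invariant positive cone. Crucially, these bi-order-defeating automorphisms must be chosen with no nontrivial finite orbits on the underlying sets of $\G$ and $\At$ --- otherwise they would be incompatible with the total orders of item (c), which they are required to fix, being members of $\stab(C)$ for $C$ containing the support of that order.

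\textbf{The main obstacle} is precisely to carry out this balancing act in the $\sigma$-closed setting. One needs, simultaneously: $\mathbb{P}$ $\sigma$-closed and $\mathcal{F}$ countably complete (for item (3)); $\mathcal{K}$ rich enough that every countable support admits a bi-order-defeating automorphism (for item (d)); and those automorphisms free of nontrivial finite orbits, so that the $\mathcal{K}$-invariant total orders of item (c) genuinely persist. Reconstructing the Theorem~\ref{biorderable} combinatorics at the level of $\omega_1$ rather than at the reals, while keeping all three demands in force, is where the work lies; the remaining verifications (local freeness, torsion-freeness, and that the level-wise orders are total orders of $\N$) go through as in the proof of Theorem~\ref{biorderable}.
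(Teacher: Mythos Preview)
Your proposal opts for the symmetric-forcing route, while the paper uses precisely the alternative you mention in passing: a Fraenkel--Mostowski permutation model of $\ZFA$ together with Pincus's transfer theorem (Theorem~\ref{transfer}). Concretely, the paper simply scales up the Theorem~\ref{biorderable} model: the atom set becomes $A' = \bigcup_{\alpha < \aleph_1} A_\alpha'$ with each $A_\alpha'$ order-isomorphic to $\mathbb{Z}$, the permutation group $\Gamma'$ still acts by shifts within each block, and the normal filter is generated by the ideal of \emph{countable} subsets of $A'$. Dependent choices then holds because this ideal is closed under countable unions. The groups $\G$ and $\At$ are defined by literally the same presentations as before, and the normal form, total order, and non-bi-orderability arguments go through verbatim once one notes that any countable support misses some block $A_\alpha'$. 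Your forcing approach can presumably be made to work too, but the FM route sidesteps the ``balancing act'' you flag as the main obstacle: the shift automorphisms on each $A_\alpha' \cong \mathbb{Z}$ already have only infinite orbits and already preserve the shortlex total order, so there is nothing new to design.

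Two points in your outline need correcting. First, the retract subgroups $\G_n$ in Theorem~\ref{biorderable} are \emph{not} finitely generated free groups: each is built on the infinite generating set $(\bigcup_{i \leq n} A_i) \times \{0,1\}$ and is a free product of copies of the infinitely generated, locally-free-but-not-free group $H$ of Lemma~\ref{localfree}; local freeness of $\G$ is inherited from local freeness of the $\G_n$, not from freeness. Second, and more consequential for the DC version, you cannot keep the filtration indexed by $\omega$. With countable supports, if there are only countably many $\mathbb{Z}$-blocks then a single countable support can meet every block, and no bi-order-defeating shift lies in its stabiliser, so your contradiction in item (d) evaporates. This is exactly why the paper passes to $\aleph_1$ blocks; in your forcing picture it forces the index set $I$ to be uncountable and the filtration to read $\G = \bigcup_{\gamma < \aleph_1} \G_\gamma$ rather than $\bigcup_{n\in\omega}\G_n$.
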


We note that Theorem \ref{dependentchoices} gives the independence of \cite[Form 227]{HR} and of \cite[Form 228]{HR} from $\ZF$ plus the principle of dependent choices.  The overall strategy in these independence proofs is to work in permutation models of set theory, constructing the claimed groups via presentations, and using the permutations of the model to eliminate any possibility of a bi-order.

We turn now from consistency results and will assume $\ZF + \AC$ (Zermelo-Fraenkel set theory with the axiom of choice) for the remainder of this discussion.  A group $G$ is \emph{strongly bounded} if whenever $G$ acts by isometries on a metric space every orbit is bounded.  Examples of such groups include the full permutation group on a set \cite{B} and products of finite perfect groups \cite{dC}.  Torsion-free examples were provided by Droste and Holland: the automorphism group of a total order which is doubly transitive \cite{DH}.  For example $\Aut(\mathbb{Q}, <)$ is strongly bounded, and this group is also left-orderable.

A group is \emph{locally indicable} if every nontrivial finitely generated subgroup has a nontrivial map to the group $\mathbb{Z}$.  A left-order $<$ on a group $G$ is said to be \emph{Conradian} if for each $g, h\in G$ with $1_G < g, h$ there exists some $n\in \omega$ such that $g < h g^n$.  A group is locally indicable if and only if it has a Conradian left-order \cite[Section 3.3]{N}.  The group $\Aut(\mathbb{Q}, <)$ is not Conradian since every countable left-orderable group embeds into it (see proof of \cite[Proposition 2.1]{N}), and there exist countable left-orderable groups which are not Conradian \cite{B2}.  We construct strongly bounded groups whose local properties are stricter than those of $\Aut(\mathbb{Q}, <)$.

\begin{bigtheorem}\label{locallyindicable}  If $K$ is locally indicable then there exists a simple, locally indicable, strongly bounded group $G \geq K$ with $|G| = |K|^{\aleph_0}$.
\end{bigtheorem}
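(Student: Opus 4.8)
The plan is to build $G$ by transfinite recursion of length $\kappa:=|K|^{\aleph_0}$; note that $\operatorname{cf}(\kappa)>\omega$ by K\"onig's theorem, since $\kappa^{\aleph_0}=\kappa$, and this is what makes the recursion length work. I will arrange that in $G$: \textbf{(a)} for every sequence $(g_n)\in G^{\omega}$ there are $a_1,\dots,a_M\in G$ (with $M$ and the length bound below absolute, i.e.\ independent of the sequence and of the stage at which it appears) and words $w_n$ of length $\le v_n$, where $v_n$ is polynomial in $n$, with $g_n=w_n(a_1,\dots,a_M)$; and \textbf{(b)} any two nontrivial elements of $G$ are conjugate in $G$. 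Both are to be maintained alongside local indicability, keeping $|G|=\kappa$. These suffice: (b) forces simplicity (a nontrivial normal subgroup meets the sole nontrivial conjugacy class, hence is all of $G$); and (a) --- a form of being \emph{strongly distorted} --- forces strong boundedness. Indeed it is enough to show that every ascending chain $W_0\subseteq W_1\subseteq\cdots$ of symmetric subsets of $G$ with $1\in W_0$, $W_kW_k\subseteq W_{k+1}$ and $\bigcup_k W_k=G$ has some term equal to $G$, since then any isometric action of $G$ on a metric space $(X,d)$ has bounded orbits (take $W_k=\{g:d(gx_0,x_0)\le 2^k\}$ for a basepoint $x_0$); and if no $W_k$ were all of $G$, one could pick $g_k\in G\setminus W_k$, apply (a), fix $k_0$ with $a_1,\dots,a_M\in W_{k_0}$, and use that a product of $m$ elements of $W_{k_0}$ lies in $W_{k_0+\lceil\log_2 m\rceil}$ to conclude $g_k\in W_{k_0+\lceil\log_2 v_k\rceil}\subseteq W_k$ for all large $k$, a contradiction.

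Set $G_0=K$ (or $G_0=\mathbb{Z}$ in the degenerate case $K=1$), take unions at limit stages, and at each successor stage handle one bookkept task, starting from the locally indicable group $H=G_\alpha$. For an \emph{absorption task} carrying a sequence $(h_n)\in H^{\omega}$, pass to the unrestricted wreath product $H'=\bigl(\prod_{\mathbb{Z}}H\bigr)\rtimes\langle t\rangle$. This is locally indicable: an arbitrary direct product of locally indicable groups is locally indicable (a nontrivial finitely generated subgroup $N\le\prod_i H_i$ has a nontrivial projection $\pi_i(N)$, itself a nontrivial finitely generated subgroup of a locally indicable group, hence $\pi_i(N)$ — and so $N$ — surjects onto $\mathbb{Z}$), and the class of locally indicable groups is closed under extension, in particular under extension by $\mathbb{Z}$. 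Using the coordinates of $H'$ one chooses, from $(h_n)$, an element $b\in\prod_{\mathbb{Z}}H$ together with $t$ and one auxiliary ``separating'' element so that each $h_n$, under the chosen embedding $H\hookrightarrow H'$, is a word of polynomially bounded length in these; this is the wreath mechanism behind Bergman's argument for $\operatorname{Sym}(\Omega)$ and the Droste--Holland argument for $\Aut(\mathbb{Q},<)$. Here $|H'|\le|H|^{\aleph_0}\le\kappa$. For a \emph{conjugation task} carrying a pair $g,g'$ of nontrivial (hence infinite-order) elements of $H$, form the HNN extension $H'=\langle H,t\mid t^{-1}gt=g'\rangle$ with infinite cyclic associated subgroups $\langle g\rangle$ and $\langle g'\rangle$, which again is to be locally indicable, and $|H'|\le|H|$.

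With routine bookkeeping over $\kappa$ one arranges that every $\omega$-sequence and every pair of nontrivial elements that ever appears is assigned to a successor stage below $\kappa$; put $G=G_\kappa$. Then $G$ is locally indicable as a directed union of locally indicable groups, and $|G|=\kappa$: each stage adds at most $\kappa$ elements so $|G|\le\kappa$, while already the first wreath step over a subgroup containing $K$ produces $|K|^{\aleph_0}=\kappa$ elements. Since $\operatorname{cf}(\kappa)>\omega$, any $\omega$-sequence of elements of $G=\bigcup_{\alpha<\kappa}G_\alpha$ has range inside some $G_\alpha$, hence is absorbed at the stage handling it, giving (a) with absolute constants; likewise any two nontrivial elements of $G$ lie in a common $G_\alpha$ and are made conjugate at a later stage, giving (b). By the first paragraph, $G$ is strongly bounded and simple, $K=G_0\le G$, and $|G|=|K|^{\aleph_0}$, proving the theorem.

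The step I expect to be the main obstacle is preserving local indicability through the enlargement steps while simultaneously controlling word length and cardinality. General HNN extensions and amalgamated products of locally indicable groups need \emph{not} be locally indicable, so the conjugation step cannot be done naively; one must either invoke and verify the hypotheses of a theorem on local indicability of one-relator products / HNN extensions (in the spirit of the work of Brodskii and Howie), or first conjugate $g$ and $g'$ into a free factor where the extension is transparently locally indicable --- making this interact cleanly with a uniform Conradian order is the delicate point. A secondary subtlety is that the auxiliary ``separating'' element in the absorption step must isolate a single coordinate of the wreath product while keeping the word expressing the $n$-th term polynomially short \emph{with absolute constants}, since the data $M$ and $(v_n)$ in (a) must be the same at every one of the $\kappa$ stages. (An alternative to the recursion, closer to the Droste--Holland examples contrasted with in the introduction, would be to realise $G$ in one step as the automorphism group of a suitably homogeneous ``locally indicable'' structure, or as a large Thompson-like group of homeomorphisms of an ordered continuum built from $K$; there the two obstacles reappear as verifying local indicability of the whole automorphism group and extracting strong boundedness from homogeneity.)
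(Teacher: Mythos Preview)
Your overall architecture---a transfinite tower of length with uncountable cofinality, in which you (b) make every pair of nontrivial elements conjugate and (a) make every $\omega$-sequence polynomially distorted, while preserving local indicability---is exactly the paper's plan, and your reduction of strong boundedness to strong distortion is equivalent to the de~Cornulier criterion the paper invokes.  Your conjugation step is also correct: the HNN extension $\langle H,t\mid t^{-1}gt=g'\rangle$ has cyclic associated subgroups, and this is precisely the hypothesis of the Karrass--Solitar theorem the paper cites (their Proposition~\ref{HNNlocallyindicable}), so local indicability is preserved there without appeal to Brodskii--Howie.

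The real gap is in your absorption step.  In the unrestricted wreath product $(\prod_{\mathbb Z}H)\rtimes\langle t\rangle$ you can certainly store the sequence as a single element $b$ with $b_n=h_n$, and $t^{-n}bt^n$ then has $h_n$ in coordinate $0$; but it has nontrivial entries in every other coordinate as well, and for a general locally indicable $H$ there is no group-theoretic way to project to a single coordinate.  The ``auxiliary separating element'' you allude to exists in the Bergman and Droste--Holland settings only because $\operatorname{Sym}(\Omega)$ and $\Aut(\mathbb Q,<)$ have very special internal structure (splittings of the underlying set, idempotent-like behaviour) that an arbitrary $H$ lacks.  As written, your absorption step does not produce words $w_n(a_1,\dots,a_M)$ equal to $h_n$, with fixed $M$, inside a locally indicable overgroup.

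The paper's solution is to do absorption by the \emph{same} mechanism as conjugation, iterated twice.  Given $\sigma:\mathbb Z\to H\setminus\{1\}$, first adjoin stable letters $t_z$ with $t_z\sigma(z)t_z^{-1}=\sigma(z+1)$ (an HNN extension with cyclic associated subgroups, hence locally indicable by Karrass--Solitar), and then adjoin a further stable letter $t$ with $tt_zt^{-1}=t_{z+1}$ (again cyclic).  Now $t_n=t^nt_0t^{-n}$ and $\sigma(n)=t_{n-1}\cdots t_0\,\sigma(0)\,t_0^{-1}\cdots t_{n-1}^{-1}$, so every $\sigma(n)$ is a word in the three elements $t,t_0,\sigma(0)$; under the de~Cornulier length function this gives a linear bound, which already beats $n^2$.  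Choosing $\sigma$ constant on the negative and nonnegative integers recovers your conjugation step as a special case.  The second HNN step is not itself over cyclic subgroups of $H$, so local indicability there needs the Karrass--Solitar subgroup theorem (the paper's Proposition~\ref{HNNsubgroups}) together with a retraction onto the free subgroup $F(\{t_z\})$; this is the argument in the paper's Lemma~\ref{obvious}.  Replacing your wreath-product absorption by this double-HNN construction closes the gap and makes the whole proof go through.
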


In Section \ref{prelim} we will give some background on permutation models and transfer theorems.  A reader who is familiar with these techniques and recognizes that the existence of such groups as are claimed in Theorems \ref{biorderable} and \ref{dependentchoices} is boundable may safely skip this material.  We give the proofs of  Theorems \ref{biorderable} and \ref{dependentchoices} in Section \ref{TheoremAB}, and the proof of Theorem \ref{locallyindicable} in Section \ref{TheoremD}.  Concluding remarks are given in Section \ref{concludingremarks}.
\end{section}

\begin{section}{Permutation Model Preliminaries}\label{prelim}

In proving Theorems \ref{biorderable} and \ref{dependentchoices} we will make use of permutation models of Zermelo-Fraenkel set theory with atoms ($\ZFA$), of which we now give a very brief review.   For more details the reader can refer to \cite[Chapter 4]{J}.  The theory $\ZFA$ is a modification of $\ZF$ which allows for a nonempty set $A$ of objects called \emph{atoms} which are themselves not sets.  This requires a tweaking of the axioms of regularity and extensionality.  For a \emph{permutation model} of $\ZFA$ we begin with a model $\mathcal{M}$ of $\ZFA$ with set $A$ of atoms.  Let $\Gamma$ be a group of permutations on the set $A$.  The action of $\Gamma$ on $A$ extends to an action on all of $\mathcal{M}$ in the natural way, by $\epsilon$-induction.  For each $B \subseteq A$ we let $\fix(B) = \{\tau \in \Gamma\mid (\forall a\in B)\tau(a) = a\}$ and for an object $x \in \mathcal{M}$ we let $\stab(x) = \{\tau\in \Gamma \mid \tau(x) = x\}$.

A \emph{normal filter $\mathcal{F}$ of subgroups of $\Gamma$} is a collection of subgroups of $\Gamma$ such that 

\begin{enumerate}

\item $\Gamma \in \mathcal{F}$;

\item if $\tau\in \Gamma$ and $H\in \mathcal{F}$ then $\tau H\tau^{-1}\in \mathcal{F}$;

\item if $H_0\in \mathcal{F}$ and $H_0\leq H_1 \leq \Gamma$ then $H_1 \in \mathcal{F}$;

\item if $H_0, H_1\in \mathcal{F}$ then $H_0 \cap H_1 \in \mathcal{F}$; and

\item $\fix(\{a\}) \in \mathcal{F}$ for each $a\in A$.

\end{enumerate}

\noindent A natural way of producing a normal filter on $\Gamma$ is from a \emph{normal ideal on $A$}: a collection $\mathcal{I}$ of subsets of $A$ such that

\begin{enumerate}

\item $\emptyset \in \mathcal{I}$;

\item if $B\in \mathcal{I}$ and $\tau \in \Gamma$ then $\tau(B) \in \mathcal{I}$;

\item if $B_0\in \mathcal{I}$ and $B_1 \subseteq B_0$ then $B_0\in \mathcal{I}$;

\item if $B_0, B_1 \in \mathcal{I}$ then $B_0 \cup B_1 \in \mathcal{I}$; and

\item $\{a\} \in \mathcal{I}$ for each $a\in A$.

\end{enumerate}

\noindent The normal filter given by $\mathcal{I}$ is $\{K \leq \Gamma \mid \fix(B) \leq K \text{ for some }B\in \mathcal{I}\}$.  We'll say an object $x$ is \emph{$\mathcal{F}$-symmetric} if $\stab(x) \in \mathcal{F}$, and that $B \in \mathcal{I}$ \emph{supports} $x$ provided $\fix(B) \leq \stab(x)$.  The permutation model $\mathcal{N} \subseteq \mathcal{M}$ given by $(\mathcal{M}, \Gamma, \mathcal{F})$ is the collection of hereditarily $\mathcal{F}$-symmetric objects- those objects $x \in \mathcal{M}$ such that each object in the transitive closure of $x$ is $\mathcal{F}$-symmetric.  This $\mathcal{N}$ is also a model of $\ZFA$ (see \cite[Theorem 4.1]{J}).

Not every consistency result in the $\ZFA$ setting can be made to hold in the $\ZF$ setting, but there are metatheorems which allow for $\ZFA$ consistency results of a certain form to transferred to $\ZF$.  We describe sufficient conditions under which a transfer can be achieved, mostly following the exposition in \cite[Note 103]{HR}.  If $\mathcal{M}$ is a model of $\ZFA$ and $x$ is an object in $\mathcal{M}$ we define

\begin{center}  $R_0(x) = x$

$R_{\alpha + 1}(x) = P(R_{\alpha}(x))\cup R_{\alpha}(x)$

$R_{\alpha}(x) = \bigcup_{\beta < \alpha} R_{\alpha}(x)$ for $\alpha$ a non-zero limit ordinal.

\end{center}

\noindent where $P(X)$ denotes the powerset of $X$.

\begin{definitions}  Suppose $V$ is a model of ZF and $\mathcal{M} \subseteq V$ is a substructure which is a model of ZFA such that $\mathcal{M}$ and $V$ have the same class $\Ord$ of ordinals, the same cofinality function, and the same Hartogs and Lindenbaum numbers, as well as the same cardinality function wherever it is defined on sets in $\mathcal{M}$.

\noindent Let $\overline{x} = (x_0, \ldots, x_{n-1})$ be a tuple of variables.  A formula $\Phi(\overline{x})$ is \emph{absolute for $\mathcal{M}, V$} if for all $j_0, \ldots, j_{n-1} \in \mathcal{M}$ we have

\begin{center} $\Phi^{V}(j_0, \ldots, j_{n-1}) \Longleftrightarrow \Phi^{\mathcal{M}}(j_0, \ldots, j_{n-1})$.
\end{center}

\noindent A formula $\Phi$ is \emph{absolute} if $\Phi$ is absolute for $\mathcal{M}, V$ whenever $\mathcal{M}$ and $V$ are as above.  Similarly a term $\sigma(\overline{x})$ is \emph{absolute} provided $\sigma^{V}(j_0, \ldots, j_{n-1}) = \sigma^{\mathcal{M}}(j_0, \ldots, j_{n-1})$ for all $V, \mathcal{M}$ as above.  A term is \emph{ordinal valued} if $(\forall\overline{x})\sigma(\overline{x}) \in \Ord$ is a theorem of $\ZFA$.  A formula $\Phi(\overline{x})$ is \emph{boundable} if there exists an absolute ordinal valued term $\sigma$ for which the biconditional statement

\begin{center} $\Phi(\overline{x}) \Longleftrightarrow \Phi^{R_{\sigma}(x_0\cup \cdots x_{n-1})}(\overline{x})$
\end{center}

\noindent is a theorem of $\ZFA$.  A statement $\Phi$ is \emph{boundable} if it is the existential closure of a boundable formula.
\end{definitions}

We will make use of the following (see \cite[Theorem 4]{P} or \cite[page 286]{HR}).

\begin{theorem}\label{transfer}  Suppose $\Psi$ is a conjunction of any of the following kinds of statements:

\begin{enumerate}

\item boundable statements;

\item the principle of dependent choices.

\end{enumerate}

\noindent If $\Psi$ has a $\ZFA$ model then $\Psi$ has a $\ZF$ model.
\end{theorem}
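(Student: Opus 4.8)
Since Theorem~\ref{transfer} is an instance of the Jech--Sochor/Pincus transfer machinery, I would only indicate the shape of the argument. The relevant case is that in which $\Psi$ holds in a \emph{permutation} model (this is what is used in the present paper, and the general $\ZFA$ case reduces to it by a standard preliminary step that I will not belabor). So suppose $\mathcal{N} = \mathcal{N}(\mathcal{M}, \Gamma, \mathcal{F})$ is a permutation model with $\mathcal{N} \models \Psi$, built inside a model $\mathcal{M} \models \ZFA + \AC$ with set $A$ of atoms, and let $V$ be the class of pure well-founded sets of $\mathcal{M}$, a model of $\ZF + \AC$. Write $\Psi = \Psi_{\mathrm{b}} \wedge \Psi_{\mathrm{DC}}$, where $\Psi_{\mathrm{b}}$ is the conjunction of the boundable conjuncts and $\Psi_{\mathrm{DC}}$ is either vacuous or the principle of dependent choices. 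Each conjunct of $\Psi_{\mathrm{b}}$ is the existential closure $\exists\,\overline{x}\,\Phi_i(\overline{x})$ of a boundable formula with absolute ordinal valued bound $\sigma_i$; I would fix witnesses $\overline{j}^{(i)}$ in $\mathcal{N}$ and then choose, in $\mathcal{M}$, a single ordinal $\lambda$ exceeding each ordinal $\sigma_i(\overline{j}^{(i)})$ and the ranks of all the $\overline{j}^{(i)}$, so that the truth of every conjunct at its witness is decided inside $(R_{\lambda}(A))^{\mathcal{N}}$.

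Next I would invoke the Jech--Sochor First Embedding Theorem in Pincus' strengthened form: there is a notion of forcing $\mathbb{P} \in V$, together with a group of automorphisms of $\mathbb{P}$ and a normal filter of subgroups, producing a symmetric submodel $N$ of a generic extension $V[G]$, and an $\in$-isomorphism $\pi$ from the transitive set $(R_{\lambda}(A))^{\mathcal{N}}$ onto a transitive set $(R_{\lambda}(\widetilde{A}))^{N} \in N$, where $\widetilde{A} = \pi(A)$ is a genuine set of $N$. The embedding is arranged so that $\Phi^{(R_{\mu}(A))^{\mathcal{N}}}(\overline{y}) \Leftrightarrow \Phi^{(R_{\mu}(\widetilde{A}))^{N}}(\pi(\overline{y}))$ for every $\mu < \lambda$ and every formula $\Phi$. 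Since $\mathcal{M}$, $V$, and $N$ agree on $\Ord$, on cofinalities, and on the Hartogs, Lindenbaum and cardinality functions, the absolute ordinal valued term $\sigma_i$ is evaluated identically in $\mathcal{N}$ and in $N$, so that $\sigma_i^{N}(\pi(\overline{j}^{(i)})) = \sigma_i^{\mathcal{N}}(\overline{j}^{(i)}) < \lambda$.

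The boundable part then transfers. For each $i$, $\mathcal{N} \models \Phi_i(\overline{j}^{(i)})$ gives, by boundability, $\Phi_i^{R_{\sigma_i}(\bigcup \overline{j}^{(i)})}(\overline{j}^{(i)})$ in $\mathcal{N}$; since $\sigma_i(\overline{j}^{(i)}) < \lambda$, the preservation property of $\pi$ yields the corresponding bounded statement for $\pi(\overline{j}^{(i)})$ in $N$; and applying boundability in reverse inside $N$ (it is a theorem of $\ZFA$, a fortiori of $\ZF$) gives $\Phi_i^{N}(\pi(\overline{j}^{(i)}))$, whence $N \models \exists\,\overline{x}\,\Phi_i(\overline{x})$. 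As one $\lambda$ and one embedding served for all $i$ at once, $N \models \Psi_{\mathrm{b}}$.

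Finally, if $\Psi_{\mathrm{DC}}$ is present, I would take the Jech--Sochor forcing $\mathbb{P}$ to be countably closed and the symmetry filter to be $\aleph_1$-complete (generated by pointwise stabilizers of countable sets of the new generic objects); $\lambda$ can be taken as large as this requires, so the preservation properties above are unaffected. A standard argument then shows $N \models \mathrm{DC}$: countable closure of $\mathbb{P}$ adds no new $\omega$-sequences, $\aleph_1$-completeness of the filter forces every countable sequence from $N$ to have a common support and hence to lie in $N$, and $\mathrm{DC}$ in $\mathcal{N}$ together with $\AC$ in $V$ propagates upward. I expect this last point to be the only genuine obstacle: coordinating the closure of the symmetric system so that $\mathrm{DC}$ survives while the embedding $\pi$ stays elementary through rank $\lambda$. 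This is exactly the content of Pincus' transfer theorem, and everything else is the routine bookkeeping of the Jech--Sochor construction.
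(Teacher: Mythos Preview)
The paper does not prove Theorem~\ref{transfer}; it simply records it with a citation to \cite[Theorem 4]{P} and \cite[page 286]{HR} and moves on. So there is no ``paper's own proof'' against which to compare your attempt.

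That said, your sketch is a faithful outline of the Jech--Sochor/Pincus machinery that underlies the cited result: fix witnesses for the boundable conjuncts, choose a rank $\lambda$ large enough to capture them, embed an initial segment of the permutation model into a symmetric $\ZF$ model via forcing, and use the boundability biconditional on both sides of the embedding to pull the existential statements across; then arrange countable closure and $\aleph_1$-completeness of the symmetry filter to preserve dependent choices. One point to tighten if you were to write this out in full: the absoluteness hypothesis in the paper's definitions is phrased for a pair $\mathcal{M} \subseteq V$ with matching ordinals, cofinalities, and Hartogs/Lindenbaum/cardinality functions, and you need this relationship to hold specifically between the permutation model $\mathcal{N}$ and the symmetric model $N$ (not just between $\mathcal{M}$, $V$, and $N$ as you wrote), so that $\sigma_i$ evaluates identically on the two sides of $\pi$. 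Pincus arranges exactly this, but your sentence glosses over which pair is doing the work. Otherwise the shape of the argument is correct and is precisely what the citations point to.
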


We'll show that the claim in Theorem \ref{biorderable}(1) is boundable, and the reasoning for the boundability of the other relevant statements is no more complicated than this, and so Theorem \ref{transfer} implies that it is sufficient to prove Theorems \ref{biorderable} and \ref{dependentchoices} in the $\ZFA$ setting.  Let $\Theta_0(G, \circ_{\G}, ^{-1}, 1_{\G})$ express that $(G, \circ_{\G}, ^{-1}, 1_{\G})$ is a group.  Let $\Theta_1(G, \circ_{\G}, ^{-1}, 1_{\G}, \sigma)$ express that $\Theta_0(G, \circ_{\G}, ^{-1}, 1_{\G})$ and that $\sigma: \omega \rightarrow P(G)$ is such that $\sigma(n)\subseteq \sigma(n+1)$ and $(\sigma(n), \circ_{\G}\upharpoonright \sigma(n), ^{-1}\upharpoonright \sigma(n), 1_{\G})$ is a retract subgroup of $\G$.  Let $\Theta_2(H, \circ_{\Ho}, ^{-1}, 1_{\Ho})$ if and only if $\Theta_0(H, \circ_{\Ho}, ^{-1}, 1_{\Ho}))$ and 

\begin{center}
$(\forall m\in \omega)(\forall f: m \rightarrow H)\newline [f(m)\text{ generates a free subgroup of }(H, \circ_{\Ho}, ^{-1}, 1_{\Ho})]$.
\end{center}

\noindent Let $\Theta_3(H, \circ_{\Ho}, ^{-1}, 1_{\Ho})$ if and only if $\Theta_0(H, \circ_{\Ho}, ^{-1}, 1_{\Ho})$ and

\begin{center}  $(\exists < \subseteq H \times H)[< \text{ is a bi-order on }(H, \circ_{\Ho}, ^{-1}, 1_{\Ho})]$.
\end{center}

\noindent Let $\Theta_4(X)$ signify

\begin{center}$(\exists < \subseteq X \times X)[< \text{ is a total order on }X]$.
\end{center}

Finally, let $\Phi(G, \circ_{\G}, ^{-1}, 1_{\G}, \sigma)$ be the conjunction of

\begin{center}
$\Theta_1(G, \circ_{\G}, ^{-1}, 1_{\G}, \sigma)$
\end{center}

\noindent and

\begin{center}  $\Theta_2(G, \circ_{\G}, ^{-1}, 1_{\G}, \sigma)$
\end{center}

\noindent and

\begin{center}  $(\forall n\in \omega)\Theta_3(\sigma(n), \circ_{\G}\upharpoonright \sigma(n), ^{-1}\upharpoonright \sigma(n), 1_{\G})$
\end{center}

\noindent and

\begin{center} $\Theta_4(G)$
\end{center}

\noindent and

\begin{center}  $\neg \Theta_3(G, \circ_{\G}, ^{-1}, 1_{\G})$.
\end{center}

\noindent It is clear that the existential closure of $\Phi$ is the statement in Theorem \ref{biorderable}(1).  Our choices of subscript $\alpha$ in $R_{\alpha}$ in what follows will be generous and are not intended to be sharp.  Any retraction $(G, \circ_{\G}, ^{-1}, 1_{\G}) \rightarrow (\sigma(n), \circ_{\G}\upharpoonright \sigma(n), ^{-1}\upharpoonright \sigma(n), 1_{\G})$ must lie inside of $R_{5}(G\cup \circ_{\G}\cup ^{-1}\cup 1_{\G}\cup \sigma)$.  We have $\omega \in R_{\omega +1}(\emptyset) \subseteq R_{\omega+1}(G\cup \circ_{\G}\cup ^{-1}\cup 1_{\G}\cup \sigma)$, and every function $f:n \rightarrow G$, with $n\in\omega$, must be in $R_{\omega + 6}(G\cup \circ_{\G}\cup ^{-1}\cup 1_{\G}\cup \sigma)$.  As well any function $f: \omega \rightarrow P(G)$ must be in $R_{\omega + 6}(G\cup \circ_{\G}\cup ^{-1}\cup 1_{\G}\cup \sigma)$.  Also, the abstract free group $\F(X)$ on a set $X$ will lie inside $R_{\omega + 1}(X)$ (see \cite[Example 2.15]{Kl}), and so an isomorphism between a generated subgroup of the image of an arbitrary function $f: m \rightarrow G$ and the abstract free group on a subset thereof will be in, say, $R_{\omega + \omega + 5}(G\cup \circ_{\G}\cup ^{-1}\cup 1_{\G}\cup \sigma)$.  A total order on a set $X$ will be in $R_5(X)$.  Thus all total orderings on $G$ and on each $\sigma(m)$ will certainly be in $R_{\omega + \omega + 10}(G\cup \circ_{\G}\cup ^{-1}\cup 1_{\G}\cup \sigma)$.  So, from $\ZFA$ we have

\begin{center} $\Phi(G, \circ_{\G}, ^{-1}, 1_{\G}, \sigma) \Longleftrightarrow \Phi^{R_{\omega + \omega + 10}(G \cup \circ_{\G}\cup ^{-1} \cup 1_G \cup \sigma)}(G, \circ_{\G}, ^{-1}, 1_{\G}, \sigma)$
\end{center}

\noindent and boundability is proved.

\end{section}

\begin{section}{Theorems \ref{biorderable} and \ref{dependentchoices}}\label{TheoremAB}

For Theorem \ref{biorderable} we will work in a model of van Douwen (see \cite{v} or \cite[Model $\mathcal{N}2(LO)$]{HR}).  We let $\mathcal{M}$ be a model of $\ZFA + \AC$ with a countable set $A$ of atoms.  Write $A$ as a disjoint union $A = \bigcup_{n\in \omega} A_n$ with each $A_n$ being countably infinite.  Endow each $A_n$ with an order $<_n$ which is isomorphic to that of the set $\mathbb{Z}$ of integers.  For $a\in A_n$ let $s(a)$ denote the next largest element in $A_n$ under the order $<_n$.  We let $\Gamma$ be the group of all permutations $\tau$ of $A$ such that $\tau \upharpoonright A_n\in \Aut(A_n, <_n)$ for each $n\in \omega$.  Let $\mathcal{F}$ be the normal filter of subgroups of $\Gamma$ generated by the ideal of finite subsets of $A$.  Let $\mathcal{N}$ be the permutation model given by $(\mathcal{M}, \Gamma, \mathcal{F})$.

\begin{subsection}{Theorem \ref{biorderable} (1).}  Let $J = A\times\{0, 1\}$ and $\F(J) = (W_J, \circ_J, ^{-1}, 1_{W_J})$ denote the free group on the set $J$, with $W_J$ denoting the set of reduced words over the alphabet $J^{\pm1}$,$\circ_J$ and $^{-1}$ denoting the group multiplication and group inversion operations, and $1_{W_J}$ denoting the trivial element.  This group, which we have defined in $\mathcal{M}$, is clearly in $\mathcal{N}$ as well; moreover, $\stab(W_J) = \stab(\circ_J) = \stab(^{-1}) = \Gamma$.  Notice that the subset $X_J = \{(s(a), 0)(a, 1)(s(a), 0)^{-1}(s(a), 1)\}_{a\in A} \subseteq W_J$ is also in $\mathcal{N}$ and also supported by $\emptyset \subseteq A$.  Therefore the normal subgroup $N_J = \langle\langle X_J \rangle\rangle \unlhd \F(J)$ is in $\mathcal{N}$ and supported by $\emptyset$, and the similar claims hold for the quotient $\G =  \F(J)/N_J$.  We emphasize that the identity element $N_J$ of $\G$, which we'll denote $1_{\G}$, is supported by $\emptyset$.

\noindent \textbf{(1) (a) \& (b).}  For each $n\in \omega$ let $J_n = (\bigcup_{0 \leq i \leq n} A_i) \times \{0, 1\}$.  Similarly define the free group $\F(J_n)$ and notice that $\F(J_n)$ is in $\mathcal{N}$ and the set of reduced words in $J_n$, the group multiplication operation and the inverse operation are supported by $\emptyset$.  Let $r_n: \F(J) \rightarrow \F(J_n)$ denote the retraction map given by deleting all letters in $J \setminus J_n$ and freely reducing, and notice that $r_n$ is in $\mathcal{N}$ and supported by $\emptyset$.  Letting $\G_n = F(J_n)N_J$ we see that $\G_n$ is also in $\mathcal{N}$ and supported by $\emptyset$.  Also, $r_n(X_J) \subseteq X_J \cup\{1_{W_J}\}$ and so $r_n(N_J) \subseteq N_J$.  Then the retraction homomorphism $\G \rightarrow \G_n$ given by taking a coset $K$ of $N_J$ to $r_n(K)N_J$ is in $\mathcal{N}$ and is similarly invariant under $\Gamma$.  Notice as well that the function $\{(n, \G_n)\}_{n\in \omega}$ is in $\mathcal{N}$ and supported by $\emptyset$.  We have thus far established the existence of the sequence of retract subgroups $\{\G_n\}_{n\in \omega}$ of $\G$ with $\G = \bigcup_{n\in \omega} \G_n$.

We will show that each $\G_n$ is locally free and bi-orderable, and this is sufficient for (a) and (b) since any finitely generated subgroup of $\G$ includes into some $\G_n$.  Fix $n\in \omega$.  Let $T_n$ denote the group $$\F(J_n)/\langle\langle \{(s(a), 0)(a, 1)(s(a), 0)^{-1}(s(a), 1)\}_{a\in \bigcup_{0 \leq i \leq n}A_i}\rangle\rangle.$$  It is easy to see that $T_n$ is in $\mathcal{N}$ and that the identity map on the generators induces an isomorphism with $\G_n$ (and this isomorphism is also in $\mathcal{N}$).  We establish that $T_n$ is locally free and bi-orderable.

By selecting $a_i \in A_i$ for each $0 \leq i \leq n$ we have $\fix(\{a_0, \ldots, a_n\}) = \fix(\bigcup_{0 \leq i \leq n}A_i)$.  Since the object $T_n$ is hereditarily supported by $\fix(\{a_0, \ldots, a_n\})$ and $\mathcal{M}$ is a model of $\ZFA + \AC$, we may use $\AC$ in arguing that $T_n$ is locally free and bi-orderable.  Since $\AC$ implies that locally free groups are bi-orderable, we therefore need only show that $T_n$ is locally free.  It is clear that $T_n$ is the free product of $n + 1$ copies of the group $H$ given by presentation.

$$
\begin{array}{ll}  \langle \{x_m\}_{m\in \mathbb{Z}} \cup \{y_n\}_{n\in \mathbb{Z}}\mid \{y_n = x_{n+1}^{-1}y_{n+1}^{-1}x_{n+1}\}_{n\in \mathbb{Z}}\rangle

\end{array}\eqno{(1)}
$$

Since the class of locally free groups is closed under taking free products, we now need to show that $H$ is locally free.

\begin{lemma}\label{localfree}  The group $H$ is locally free and all generators $\{x_m\}_{m\in \mathbb{Z}} \cup \{y_n\}_{n\in \mathbb{Z}}$ are nontrivial elements in $H$.
\end{lemma}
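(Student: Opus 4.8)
The plan is to prove the stronger statement that $H$ is itself free, of countably infinite rank; local freeness of $H$ is then immediate (every finitely generated subgroup of a free group is free), and the nontriviality of the generators drops out of the isomorphism.

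First I would isolate the mechanism behind the presentation. Writing $R_n$ for the defining relation $y_n = x_{n+1}^{-1}y_{n+1}^{-1}x_{n+1}$, solving $R_n$ for $y_{n+1}$ yields $y_{n+1} = x_{n+1}y_n^{-1}x_{n+1}^{-1}$, so in $H$ every $y_n$ with $n\neq 0$ is a word in the $x_m$'s and $y_0$ (obtain $y_1,y_2,\dots$ from $R_0,R_1,\dots$ and $y_{-1},y_{-2},\dots$ from $R_{-1},R_{-2},\dots$). To make this rigorous without invoking an infinite sequence of Tietze transformations, I would pass to the free group $\F(S)$ on $S=\{x_m\}_{m\in\mathbb{Z}}\cup\{z\}$ and define words $w_n\in\F(S)$ for $n\in\mathbb{Z}$ by $w_0=z$, $w_{n+1}=x_{n+1}w_n^{-1}x_{n+1}^{-1}$ for $n\geq 0$, and $w_{n-1}=x_n^{-1}w_n^{-1}x_n$ for $n\leq 0$. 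A short case check on the sign of $n$ shows $w_n=x_{n+1}^{-1}w_{n+1}^{-1}x_{n+1}$ for every $n\in\mathbb{Z}$: for $n\leq -1$ it is the definition of $w_n$, and for $n\geq 0$ it follows by inverting the formula defining $w_{n+1}$.

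Consequently $x_m\mapsto x_m$, $y_n\mapsto w_n$ extends to a homomorphism from the free group on $\{x_m\}_{m\in\mathbb{Z}}\cup\{y_n\}_{n\in\mathbb{Z}}$ to $\F(S)$ which annihilates every relator of the presentation $(1)$, hence induces $\bar{\phi}\colon H\to\F(S)$. In the other direction $x_m\mapsto x_m$, $z\mapsto y_0$ defines $\psi\colon\F(S)\to H$ since $\F(S)$ is free. Then $\bar{\phi}\circ\psi$ fixes each $x_m$ and sends $z$ to $\bar{\phi}(y_0)=w_0=z$, so $\bar{\phi}\circ\psi=\mathrm{id}_{\F(S)}$; and $\psi\circ\bar{\phi}$ fixes each $x_m$ and sends $y_n$ to $\psi(w_n)$, which equals $y_n$ in $H$ by induction on $|n|$ from the recursion for $w_n$ and the relations $R_n$ (e.g.\ $\psi(w_1)=x_1y_0^{-1}x_1^{-1}=y_1$ by $R_0$, and $\psi(w_{-1})=x_0^{-1}y_0^{-1}x_0=y_{-1}$ by $R_{-1}$). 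Hence $\bar{\phi}$ is an isomorphism, $H\cong\F(S)$ is free on the countably infinite set $S$, and in particular locally free. Finally each generator is nontrivial in $H$: $\bar{\phi}(x_m)=x_m\neq 1$, and $\bar{\phi}(y_n)=w_n$ is a conjugate of $z^{\pm1}$ in $\F(S)$ and so $\neq 1$.

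The part requiring care is the bookkeeping in the previous paragraph: that the substitution $y_n\mapsto w_n$ really kills all of the infinitely many relators, and that $\psi\circ\bar{\phi}$ is the identity on all of $H$. Channeling the argument through $\F(S)$ and the explicit mutually inverse maps $\bar{\phi},\psi$ reduces this entirely to the single identity $w_n=x_{n+1}^{-1}w_{n+1}^{-1}x_{n+1}$, checked separately for $n\leq -1$, $n=0$, and $n\geq 1$, so I do not expect any real obstacle.
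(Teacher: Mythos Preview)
Your argument is correct and in fact proves more than the paper does: you establish that $H$ is free on $\{x_m\}_{m\in\mathbb{Z}}\cup\{y_0\}$, from which local freeness and nontriviality of the generators are immediate. The verification that $w_n = x_{n+1}^{-1}w_{n+1}^{-1}x_{n+1}$ for every $n$ is straightforward, and framing the Tietze-type elimination as a pair of explicit mutually inverse homomorphisms $\bar{\phi}$, $\psi$ is exactly the right way to avoid any issue with ``infinitely many Tietze moves.''

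The paper takes a genuinely different route. Rather than eliminating all $y_n$ with $n\neq 0$, it fixes $N$, eliminates only the $y_n$ with $n<N$ to obtain $H\cong H_N$, and then passes to the quotient $H_N/\langle\!\langle\{x_n\}_{n>N}\rangle\!\rangle$, which is visibly free on $\{x_m\}_{m\le N}\cup\{y_N\}$; this shows that $\{x_m\}_{m\le N}\cup\{y_N\}$ freely generates a subgroup of $H$, and any finite subset of $H$ sits inside such a subgroup for $N$ large enough. The paper therefore never asserts that $H$ itself is free. Your approach is shorter and yields the stronger conclusion; the paper's approach has the minor advantage that the quotient trick makes the freeness of the relevant subgroup transparent without any bookkeeping, but at the cost of an extra layer (the isomorphism $\rho_N$ and the quotient).
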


\begin{proof}  Notice that for a fixed $N \in \mathbb{Z}$ the presentation defining $H$ does not require the relators $\{y_n = x_{n+1}^{-1}y_{n+1}^{-1}x_{n+1}\}_{n<N}$ and the generators $\{y_n\}_{n<N}$ since the relators $\{y_n = x_{n+1}^{-1}y_{n+1}^{-1}x_{n+1}\}_{n<N}$ are only used in giving names to the elements $\{y_n\}_{n<N}$.  This is because for any positive $k\in \omega$ we can write

\begin{center}
$y_{N-k} = x_{N - k + 1}^{-1}x_{N - k + 2}^{-1}\cdots x_N^{-1} y_N^{(-1)^k}  x_N \cdots x_{N - k + 2} x_{N - k + 1}$
\end{center}

\noindent In particular, for any fixed $N\in \mathbb{Z}$ we know that $H$ is isomorphic to the group $H_N$ with presentation

$$
\begin{array}{ll}  \langle \{x_m\}_{m\in \mathbb{Z}} \cup \{y_n\}_{n\geq N}\mid \{y_n = x_{n+1}^{-1}y_{n+1}^{-1}x_{n+1}\}_{n\geq N}\rangle
\end{array}\eqno{(2)}
$$

\noindent via the map $\rho_N$ determined by

\begin{center}$x_m \mapsto x_m$ for all $n\in\mathbb{Z}$

$y_n \mapsto y_n$ for $n\geq N$

$y_{N - k} \mapsto  x_{N - k + 1}^{-1}x_{N - k + 2}^{-1}\cdots x_N^{-1} y_N^{(-1)^k}  x_N \cdots x_{N - k + 2} x_{N - k + 1}^{-1}$ for $k \geq 1$

\end{center}

\noindent Consider the normal subgroup $K = \langle\langle \{x_n\}_{n>N}\rangle\rangle \unlhd H_N$.  The quotient $H_N/K$ has presentation

$$
\begin{array}{ll}  \langle \{x_m\}_{m\leq N} \cup \{y_n\}_{n\geq N}\mid \{y_n = y_{n+1}^{-1}\}_{n\geq N}\rangle
\end{array}
$$

\noindent and this group is simply the free group in the generators $\{x_m\}_{m\leq N} \cup \{y_N\}$.  This implies that for each $N\in \mathbb{Z}$ the set $\{x_m\}_{m \leq N} \cup \{y_N\}$ freely generates a subgroup of $H_N$.

For any finite set of words $\{w_0, \ldots, w_r\}$ in the letters $\{x_m\}_{m\in \mathbb{Z}}^{\pm 1} \cup \{y_n\}_{n\in \mathbb{Z}}^{\pm 1}$ there exists some $N$ for which each of the words $w_0, \ldots, w_r$ is written in the letters $\{x_m\}_{m\leq N}^{\pm 1} \cup \{y_n\}_{n\leq N}^{\pm 1}$.  Then applying $\rho_N$ to the group elements represented by $w_0, \ldots, w_r$ places this set within the subgroup $\langle \{x_m\}_{m\leq N} \cup \{y_N\}\rangle \leq H_N$, and since this subgroup is free, we have that $H$ is locally free.  The second claim follows immediately from our proof since we showed that for each $N\in \mathbb{Z}$ the set $\{x_m\}_{m\leq N}\cup\{y_N\}$ freely generates a subgroup of $H$.
\end{proof}

\noindent \textbf{(1) (c).} Towards producing a total order on $\G$ we produce, in $\mathcal{M}$, a normal form for $\G$.  Since $\AC$ holds in $\mathcal{M}$ we shall freely use choices in this construction, and the fact that the normal form is also in $\mathcal{N}$ will become apparent.  Recall that a word rewriting system on a free monoid $\Mon(X)$ on set $X$ is a set of rules $\mathcal{R}$ whose inputs and outputs are words in the monoid (see \cite[Section 1.7]{Sap}).  We define binary relation $\rightarrow_{\mathcal{R}}$ on $\Mon(X)$ by letting $w_0 \rightarrow_{\mathcal{R}} w_1$ if there exist $v_0, v_1, v_1', v_2 \in \Mon(X)$ with $w_0 \equiv v_0v_1v_2$ and $w_1 \equiv v_0v_1'v_2$ and $(v_1, v_1')\in \mathcal{R}$.  Let $\rightarrow_{\mathcal{R}}^*$ be the smallest transitive binary relation including $\rightarrow_{\mathcal{R}}$ and let $\leftrightarrow_{\mathcal{R}}^*$ denote the smallest equivalence class including $\rightarrow_{\mathcal{R}}^*$.  Rewriting system $\mathcal{R}$ is \emph{confluent} if whenever $w_0 \rightarrow_{\mathcal{R}}^*w_1$ and $w_0 \rightarrow_{\mathcal{R}}^* w_2$ there exists $w_3$ for which $w_1 \rightarrow_{\mathcal{R}}^* w_3$ and $w_2 \rightarrow_{\mathcal{R}}^* w_3$.  It is \emph{locally confluent} if whenever $w_0 \rightarrow_{\mathcal{R}} w_1$ and $w_0 \rightarrow_{\mathcal{R}} w_2$ there exists $w_3$ for which $w_1 \rightarrow_{\mathcal{R}}^* w_3$ and $w_2 \rightarrow_{\mathcal{R}}^* w_3$.

Rewriting system $\mathcal{R}$ is \emph{terminating} if each sequence $w_0 \rightarrow_{\mathcal{R}} w_1 \rightarrow_{\mathcal{R}} w_2 \cdots \rightarrow_{\mathcal{R}} w_n$ must eventually stabilize.  A word $w$ is a \emph{terminus} of $\mathcal{R}$ if $w \rightarrow_{\mathcal{R}} v$ implies $w \equiv v$.  If $\mathcal{R}$ is terminating and locally confluent then it is confluent, and if $\mathcal{R}$ is terminating and confluent then each equivalence class in $\leftrightarrow_{\mathcal{R}}^*$ contains a unique terminus (see \cite[Section 7.1]{Sap}).

We let $\Mon(J^{\pm 1})$ denote the free monoid on the set $\{(a, 0)\}_{a\in A} \cup \{(a, 1)\}_{a\in A} \cup\{(a, 0)^{-1}\}_{a\in A}\cup \{(a,1)^{-1}\}_{a\in A}$, and let $e$ denote the empty word.  Consider the rewriting system $\mathcal{R}$ under which for all $a\in A$ we have rules

\begin{enumerate}[(i)]

\item $(a, 0)(a, 0)^{-1}\mapsto e$ 

\item $(a, 0)^{-1}(a, 0) \mapsto e$

\item $(a, 1)(a, 1)^{-1}\mapsto e$

\item $(a, 1)^{-1}(a, 1)\mapsto e$

\item $(s(a), 0)(a, 1) \mapsto (s(a), 1)^{-1}(s(a), 0)$

\item $(s(a), 0)(a, 1)^{-1} \mapsto (s(a), 1)(s(a), 0)$

\item $(s(a), 0)^{-1}(s(a), 1) \mapsto (a, 1)^{-1}(s(a), 0)^{-1}$

\item $(s(a), 0)^{-1}(s(a), 1)^{-1} \mapsto (a, 1)(s(a), 0)^{-1}$
\end{enumerate}

\noindent The idea of this system is to both freely reduce and to move the $(a, 0)^{\pm 1}$ letters to the right.  We claim that this rewriting system is locally confluent.  We'll sketch the argument in the slightly less obvious cases.

Suppose that we have a word $w \equiv v_0(s(a), 0)^{-1}(s(a), 0)(a, 1)v_1$.  If one first applies (ii) then one obtains the word $v_0(a, 1)v_1$.  If we first use (v) then we get the word $v_0(s(a), 0)^{-1}(s(a), 1)^{-1}(s(a), 0)v_1$, and applying rule (viii) we get word $v_0(a, 1)(s(a), 0)^{-1}(s(a), 0) v_1$, and applying rule (ii) we obtain $v_0(a, 1)v_1$.

Suppose that we have word $w \equiv v_0(s(a), 0)(a, 1)^{-1}(a, 1)v_1$.  If we immediately apply rule (iv) then we obtain $v_0(s(a), 0)v_1$.  If, instead, we first apply (vi) to obtain $v_0(s(a), 1)(s(a), 0)(a, 1)v_1$, then applying rule (v) gives us the word $$v_0(s(a), 1)(s(a), 1)^{-1}(s(a), 0)v_1$$ and rule (iii) gives $v_0(s(a), 0)v_1$.

All other checks for local confluence are either quite trivial or are argued in like manner.  We note also that the rewriting system is terminating.  To see this, given a word $w$ we consider the function 

\begin{center}
$j(w) = \sum_{0 \leq i <\Len(w), w(i) \in \{(a, 0)^{\pm 1}\}_{a\in A} } |\{i < k < \Len(w)\mid w(k) \in  \{(a', 1)^{\pm 1}\}_{a'\in A}\}|$

\end{center}

\noindent which counts the total number of times that a letter of form $(a', 1)^{\pm 1}$ appears in the word somewhere to the right of a letter of form $(a, 0)^{\pm 1}$.  Each application of a rule will lower the value of the function $\Len(w) + j(w)$, and so the fact that the system is terminating follows.  Thus each equivalence class under $\leftrightarrow_{\mathcal{R}}^*$ contains a unique terminus.

All elements of the set $R$ of words which are the terminus of a word in $\Mon(J^{\pm 1})$ under $\mathcal{R}$ are freely reduced. The set $R$ is also obviously in $\mathcal{N}$ (notice that the rules are themselves invariant under the action of $\Gamma$) and supported by $\emptyset$.  Furthermore it is straightforward to see that each element in $R$ is a unique representative of an element in $\G$.  We give an order $<^l$ to the letters in $J^{\pm 1}$ as follows:

\begin{center}

$(a, 0)^{-1} <^l (a, 0) <^l (a, 1)^{-1} <^l (a, 1) <^l (a', 0)^{-1} <^l (a', 0) <^l (a', 1)^{-1} <^l (a', 1)$

\end{center}

\noindent where either $a, a'\in A_n$ with $a <_n a'$ or $a\in A_n$ and $a'\in A_{n'}$ with $n < n'$.   Endow the elements of $R$ with the shortlex order $<^o$: $w_0 <^o w_1$ if either $\Len(w_0) < \Len(w_1)$, or $\Len(w_0) = \Len(w_1)$ and for the least $0 \leq i <\Len(w_0)$ at which $w_0(i) \neq w_1(i)$ we have $w_0(i) <^l w_1(i)$.  It is clear that both $<^l$ and $<^o$ are in $\mathcal{N}$, and more particularly they are supported by $\emptyset$.

\noindent \textbf{(1) (d).}  To see that $\G$ is not bi-orderable we suppose for contradiction that $<_{\G}$ is a bi-order on $\G$ in $\mathcal{N}$.  Select finite $B \subseteq A$ for which $\fix(B) \leq \stab(<_{\G})$.  Select $n\in \omega$ large enough that $A_n \cap B = \emptyset$.  Let $\tau \in \Gamma$ be given by

\begin{center} $\tau(a) = \begin{cases}a$ if $a\notin A_n\\ s(a) $ if $a\in A_n   \end{cases}$
\end{center}

\noindent  Let $a\in A_n$ be given.  By Lemma \ref{localfree} we know that $(a, 1)N_J$ is nontrivial.  If $1_{\G} <_{\G} (a, 1)N_J$ then $1_{\G} <_{\G} (s(a), 0)(a, 1)(s(a), 0)^{-1}N_J = (s(a), 1)^{-1}N_J$, from which we see that $(s(a), 1)N_J <_{\G} 1_{\G}$, but on the other hand

\begin{center}  $1_{\G} = \tau(1_{\G}) <_{\G} \tau((a, 1))N_J = (s(a), 1)N_J$
\end{center} 

\noindent which is a contradiction.  The proof in case $(a, 1)N_J <_{\G} 1_{\G}$ is symmetric.
\end{subsection}

\begin{subsection}{Theorem \ref{biorderable} (2).}  We sketch over the aspects of the proof which are nearly identical to those in  (1).  We take $\F(A)$ to be the free group on the set $A$ of atoms.  Consider the subset $X_A = \{[a, a']\}_{a, a'\in A} \cup \{a(s(a))^2\}_{a\in A}$, where $[a, a']$ denotes the commutator $aa'a^{-1}(a')^{-1}$.  This set is in $\mathcal{N}$ and supported by $\emptyset$, and similarly for the relevant group operations and underlying set of $\At = \F(A)/\langle\langle X_A\rangle\rangle$.  Letting $0_{\At}$ denote the identity element, we emphasize that $0_{\At}$ is supported by $\emptyset$.  Let $B_n = \bigcup_{0 \leq i \leq n}A_n$ and $r_n: \F(A) \rightarrow \F(B_n)$ be the retraction.  Let $\At_n = \F(B_n)N_A$.  Since $r_n(Y) \subseteq Y \cup \{1\}$ we have $r_n(N_A) \subseteq N_A$.  Thus we have a retraction map $\At \rightarrow \At_n$ given by $K \mapsto r_n(K) N_A$ which is in $\mathcal{N}$ and supported by $\emptyset$.

Define $L_n$ by $\F(B_n)/\langle \langle \{[a, a']\}_{a, a'\in B_n} \cup \{a(s(a))^2\}_{a\in B_n} \rangle\rangle$.  Notice that $L_n \simeq \At_n$ via the identity map on the generators (and this isomorphism is in $\mathcal{N}$).  Taking $a_i \in A_i$ for each $0 \leq i \leq n$ we have again that $\fix(\{a_0, \ldots, a_n\}) = \fix(B_n)$.  Thus we may utilize $\AC$, which holds in $\mathcal{M}$, in analyzing $L_n$.  It is easy to see that $L_n$ is isomorphic to a direct sum of $n+1$ copies of the additive group $\mathbb{Z}[\frac{1}{2}]$.  Thus $\At_n$, and therefore all of $\At$, is torsion-free and for each $a\in A$ we have $aN_A$ nontrivial in $\At$.

A normal form on $\At$ is given by words of the form

\begin{center} $a_0^{z_0}a_1^{z_1}\cdots a_m^{z_m}$
\end{center}

\noindent where for each $ 0\leq i \leq m$ we have $z_i \in \mathbb{Z} \setminus 2\mathbb{Z}$ and $a_i\in A_{j_i}$ with $j_0 < j_1 < \cdots < j_m$.  The set of all such words is in $\mathcal{N}$ and supported by $\emptyset$.  Order the letters $A^{\pm 1}$ by order $<^l$ given by

\begin{center}  $a^{-1} <^l a <^l (a')^{-1} <^l a'$
\end{center}

\noindent where $a, a'\in A_n$ for some $n\in \omega$ and $a <_n a'$ or $a\in A_n$ and $a'\in A_{n'}$ with $n < n'$.  This order $<^l$ is invariant under $\Gamma$.  Order $\At$ using shortlex on the normal form.

Now suppose that $<_{\At}$ is a bi-order on $\At$.  Let $B \subseteq A$ be finite with $\fix(B) \leq \stab(<_{\At})$.  Select $n \in \omega$ such that $A_n \cap B = \emptyset$.  Let $\tau \in \Gamma$ be given by

\begin{center}  $\tau(a) = \begin{cases} a$ if $a\in A \setminus A_n\\ s(a)$ if $a\in A_n  \end{cases}$.
\end{center}

\noindent Let $a\in A_n$.  Suppose that $0_{\At} <_{\At} aN_A$.  On one hand we have that $0_{\At} = \tau(0_{\At}) <_{\At} \tau(aN_A) = s(a)N_A$, but on the other hand we have $s(a)N_A = a^{-2}N_A <_{\At} 0_{\At}$, a contradiction.  The proof in case $aN_A <_{\At} 0_{\At}$ is symmetric.
\end{subsection}

\begin{subsection}{Theorem \ref{dependentchoices}}  The proof of this theorem follows analogous lines and we give the sketch.  Let $\mathcal{M'}$ be a model of $\ZFA + \AC$ with a set $A'$ of atoms which is of cardinality $\aleph_1$.  Express $A'$ as a disjoint union $A' = \bigcup_{\alpha < \aleph_1} A_{\alpha}'$ with each $A_{\alpha}'$ being countably infinite and endowed with a total order $<_{\alpha}$ which makes $A_{\alpha}'$ order isomorphic to $\mathbb{Z}$.  Let $\Gamma'$ be the set of bijections $\tau$ on $A$ for which $\tau \upharpoonright A_{\alpha}' \in \Aut(A_{\alpha}', <_{\alpha})$ for all $\alpha< \aleph_1$.  Let $\mathcal{F}'$ be the normal filter on $\Gamma'$ given by countable subsets of $A'$.  Define $\G$ in the analogous way substituting $A'$ for $A$ and check local freeness as before but using retractions which erase letters in $((A' \setminus \bigcup_{\alpha <\gamma} A_{\alpha}') \times \{0, 1\})^{\pm 1}$ with $\gamma <\aleph_1$.  Formulate the comparable normal form and order $\G$, use the comparable argument in showing that $\G$ is not bi-orderable.  Define and treat the abelian group $\At$ under the comparable analogy.  That the model $\mathcal{N'}$ satisfies the principle of dependent choices follows from the fact that the ideal defining the filter $\mathcal{F}'$ is closed under taking countable unions (see \cite[Note 144]{HR}).
\end{subsection}

\end{section}

\begin{section}{Theorem \ref{locallyindicable}}\label{TheoremD}

We remind the reader of some basic facts about locally indicable groups, utilizing $\AC$ freely for the entirety of this section.  The class of locally indicable groups includes all free groups, since nontrivial finitely generated subgroups of free groups are free of rank at least $1$ and therefore indicable.  Subgroups of locally indicable groups are obviously locally indicable.  Moreover the class is closed under extensions: if $1 \rightarrow N \rightarrow G \rightarrow Q \rightarrow 1$ is a short exact sequence with $N$ and $Q$ locally indicable then so is $G$, since a finitely generated nontrivial subgroup of $G$ will either lie inside of $N$ or will map nontrivially to $Q$.

The class is also closed under taking free products.  If $A$ and $B$ are locally indicable then so is $A \times B$ and the standard short exact sequence $$1 \rightarrow F \rightarrow A * B \rightarrow A \times B \rightarrow 1$$ with $F$ being a free subgroup of $A * B$, demonstrates that $A * B$ is locally indicable.  By induction this class is closed under free products $*_{i\in I}G_i$ with $I$ finite, and when $I$ is infinite a finitely generated subgroup will lie inside of some $*_{i \in I'}G_i$ with $I' \subseteq I$ finite, so we indeed have closure under arbitrary free products.

We will make use of two results of Karrass and Solitar (see \cite[Theorem 2]{KS} and \cite[Theorem 6]{KS}, respectively).   The setup of these results is the following:  Let $J$ be a group and $\phi_i: A_i \rightarrow B_i$ be a collection of isomorphisms between subgroups $A_i, B_i \subseteq J$.  Let $L$ be the HNN extension $J *_{t_iA_it_i^{-1} = \phi_i(B_i)}$.

\begin{proposition}\label{HNNlocallyindicable}  If $J$ is locally indicable and each of the $A_i$ is cyclic then $L$ is locally indicable.
\end{proposition}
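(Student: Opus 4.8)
The plan is to exploit the standard structure theory of HNN extensions together with the closure properties of locally indicable groups recalled above, reducing everything to a statement about finitely generated subgroups. First I would observe that since local indicability is inherited by finitely generated subgroups, it suffices to show that an arbitrary finitely generated subgroup $G \leq L$ is indicable, i.e. admits a nontrivial homomorphism to $\mathbb{Z}$. By the Karrass--Solitar subgroup theorem for HNN extensions (\cite[Theorem 2]{KS}), such a $G$ is itself isomorphic to an HNN extension (in general a fundamental group of a graph of groups, but because $G$ is finitely generated the graph can be taken finite) whose vertex groups are conjugates of subgroups of $J$ intersected with $G$, and whose edge groups are conjugates of subgroups of the $A_i$ intersected with $G$ — in particular the vertex groups are finitely generated subgroups of (conjugates of) $J$, hence locally indicable, and the edge groups are subgroups of cyclic groups, hence again cyclic.

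The key step is then to show that a finitely generated group $G$ which is the fundamental group of a finite graph of groups with locally indicable vertex groups and cyclic edge groups is indicable. I would argue by induction on the first Betti number of the underlying graph together with the number of edges. If the graph has a loop (an edge contributing to $H_1$ of the graph), collapsing it exhibits $G$ as an HNN extension $G' *_{t C t^{-1} = \phi(C)}$ with $C$ cyclic and $G'$ a fundamental group of a graph of groups with one fewer loop; abelianizing, the stable letter $t$ survives to give a surjection $G \to \mathbb{Z}$ unless — the one genuinely delicate point — the abelianization is torsion, which cannot happen here precisely because the amalgamation identifies a cyclic group $C = \langle c \rangle$ with $\phi(C) = \langle \phi(c)\rangle$, so in $G^{\mathrm{ab}}$ we only impose one relation $c = \phi(c)$ per stable letter and the rank-nullity count keeps $t$ of infinite order. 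If the graph is a tree, then $G$ is built from its vertex groups by iterated amalgamated free products over cyclic subgroups; here I would use the other Karrass--Solitar result (\cite[Theorem 6]{KS}) — or equivalently the closure of local indicability under amalgams over cyclic (indeed over locally indicable, locally-cyclic-image) subgroups, which is the classical fact that an amalgam $A *_C B$ with $A, B$ locally indicable and $C$ cyclic is locally indicable — to conclude. The base case, a single vertex group, is locally indicable by hypothesis, hence indicable since $G$ is nontrivial and finitely generated.

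I expect the main obstacle to be bookkeeping the Karrass--Solitar description precisely enough: their theorems give the subgroup as an HNN extension over a (possibly infinite) system of stable letters with vertex group a free product of intersections $G \cap x J x^{-1}$ and edge groups $G \cap x A_i x^{-1}$, and one must check (i) that finite generation of $G$ lets one discard all but finitely many stable letters and all but finitely many free factors, and (ii) that $G \cap x A_i x^{-1}$, being a subgroup of a conjugate of the cyclic group $A_i$, is again cyclic — this is where the hypothesis that each $A_i$ is cyclic is used, and it is essential: it is what prevents the edge groups from proliferating into something over which local indicability fails. Once the reduction to a finite graph of groups with locally indicable vertices and cyclic edges is in place, the homological argument for the presence of a $\mathbb{Z}$-quotient is routine, and the tree case is a direct invocation of the cited closure property.
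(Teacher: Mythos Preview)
The paper does not give a proof of this proposition: it is simply quoted as \cite[Theorem 2]{KS}, one of two results of Karrass and Solitar invoked as black boxes (the other, \cite[Theorem 6]{KS}, is recorded as Proposition~\ref{HNNsubgroups}). So there is no argument in the paper to compare against; your proposal is a reconstruction of a proof the paper deliberately omits.

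That said, your outline is essentially the standard route and would work, with two small corrections. First, the citations are tangled: in this paper \cite[Theorem 2]{KS} \emph{is} the proposition under discussion, not the subgroup theorem; the structure result you actually want is the general Karrass--Solitar subgroup theorem for HNN groups (or just Bass--Serre theory), not the special case recorded here as Proposition~\ref{HNNsubgroups}, which only applies to subgroups missing all conjugates of the associated subgroups. Second, the HNN step is simpler than you make it: in any HNN extension $G' *_{tCt^{-1}=\phi(C)}$ the assignment $t \mapsto 1$, $G' \mapsto 0$ is already a well-defined surjection onto $\mathbb{Z}$, since the relation $tct^{-1}=\phi(c)$ abelianizes to $c=\phi(c)$ and does not involve $t$ at all --- there is no ``delicate point'' about torsion and no rank-nullity bookkeeping needed, and cyclicity of $C$ plays no role at this stage. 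The hypothesis that the $A_i$ are cyclic enters exactly where you say: it forces the edge groups in the graph-of-groups decomposition of a finitely generated subgroup to be cyclic, which you then use (together with the companion fact that amalgams of locally indicable groups over cyclic subgroups are locally indicable) to handle the tree case.
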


\begin{proposition}\label{HNNsubgroups}  If $H \leq L$ is a subgroup which has trivial intersection with all conjugates of $A_i$ and $B_i$ in $L$ then $H$ is the free product of a free group and the intersections of $H$ with certain conjugates of $J$ in $L$.
\end{proposition}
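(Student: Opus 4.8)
The plan is to derive this from Bass--Serre theory, via the action of $L$ on the Bass--Serre tree $T$ associated with the given HNN decomposition of $L$ over $J$. Recall that $T$ has vertex set the cosets $\{gJ : g\in L\}$ and edge set $\bigsqcup_i \{gA_i : g\in L\}$ — one $L$-orbit of edges for each stable letter $t_i$, with the edge $gA_i$ joining $gJ$ to $gt_iJ$ — and $L$ acting by left translation. The stabilizer of the vertex $gJ$ is $gJg^{-1}$, and the stabilizer of an edge in the $i$-th orbit is a conjugate of $A_i$, which under translation by $t_i$ is carried onto a conjugate of $B_i$. The quotient $L\backslash T$ is a wedge of loops, one per $i$, recovering the HNN presentation.

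First I would restrict the action to $H \leq L$. The $H$-stabilizer of an edge of $T$ is of the form $H \cap gA_ig^{-1}$ or $H \cap gB_ig^{-1}$, and by hypothesis each of these is trivial; hence $H$ acts on $T$ freely on edges. (Passing to the barycentric subdivision first, if desired, makes the action without inversions without affecting any of the relevant stabilizers.) Next I would invoke the fundamental theorem of Bass--Serre theory: $H$ is the fundamental group of the quotient graph of groups $H\backslash T$. Since $H$ acts freely on edges, every edge group of $H\backslash T$ is trivial, and the fundamental group of a graph of groups with trivial edge groups is exactly the free product of its vertex groups with the free group $\pi_1$ of the underlying graph. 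The vertex groups are the $H$-stabilizers of a set of $H$-orbit representatives of vertices of $T$, each of which is $H \cap gJg^{-1}$ for a suitable $g\in L$; this is precisely the asserted conclusion.

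The hard part — and the reason Karrass and Solitar needed real work before tree methods were standard — is the structure theorem underlying the last step, namely that a group acting on a tree is the fundamental group of the quotient graph of groups, together with the identification of the vertex and edge stabilizers of $T$, which itself rests on Britton's lemma / the normal form theorem for HNN extensions. If one instead wants to argue combinatorially as in \cite{KS}, the obstacle shifts to a Reidemeister--Schreier-style computation: choosing a compatible system of coset representatives for $H$ in $L$, rewriting the stable letters and the elements of $J$ in terms of a Schreier-type generating set, and checking that the rewritten relators coming from the $A_i$ and $B_i$ become vacuous precisely because $H$ meets no conjugate of $A_i$ or $B_i$ — leaving a presentation of $H$ as a free product of a free group with subgroups of the form $H \cap gJg^{-1}$. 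In either route the hypothesis on $H$ enters in exactly one place: it kills the edge groups.
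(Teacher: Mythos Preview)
Your Bass--Serre argument is correct. Note, however, that the paper does not actually prove this proposition: it is quoted verbatim as \cite[Theorem 6]{KS} and used as a black box. So there is no ``paper's own proof'' to compare against, only the original Karrass--Solitar argument.

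That said, your approach is genuinely different from theirs. Karrass and Solitar wrote in 1971, before Bass--Serre theory was available, and their proof is the combinatorial Reidemeister--Schreier computation you sketch in your final paragraph: they build an explicit Schreier transversal adapted to the HNN structure, rewrite the defining relators, and observe that the hypothesis on $H$ forces the rewritten relators coming from the associated subgroups to collapse. Your tree-theoretic proof is shorter and more conceptual --- the hypothesis that $H$ misses all conjugates of $A_i$ and $B_i$ becomes the single statement ``$H$ acts freely on edges,'' and the structure theorem does the rest --- at the cost of importing the Bass--Serre machinery (which in turn rests on Britton's lemma, as you note). The combinatorial route is self-contained but longer; the geometric route is cleaner but leans on a heavier theorem. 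Either is acceptable here, and you have correctly identified where the hypothesis enters in both.
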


\begin{construction}\label{beatdown}  Suppose that $M \leq J$ are nontrivial torsion-free groups and that $\sigma: \mathbb{Z} \rightarrow M \setminus \{1\}$ is a function.  Take $L_0$ to be the HNN extension of $J$ given by $L_0 = J*_{{t_z\langle \sigma(z)\rangle t_z^{-1} = \langle \sigma(z+1) \rangle}_{z\in \mathbb{Z}}}$.  Now the free group $F(\{t_z\}_{z\in \mathbb{Z}})$ is a retract subgroup of $L_0$ and we let $\phi$ be the automorphism on $F(\{t_z\}_{z\in \mathbb{Z}})$ for which $\phi(t_z) = t_{z+1}$.  Let $E(M, J, \sigma)$ denote the HNN extension $L_0 *_{t\langle t_z\rangle t^{-1} = \langle t_{z+1}\rangle}$.  This group $E(M, J)$ will also be torsion-free by the standard theorems regarding HNN extensions, and also $J$ naturally embeds as a subgroup of $E(M, J)$.

Given a torsion-free group $J$ we let $\{\sigma_{\alpha}\}_{\alpha < |J|^{\aleph_0}}$ be a well ordering of the functions $\sigma:\mathbb{Z} \rightarrow J \setminus \{1\}$.  We define an increasing sequence $\{J_\alpha\}_{\alpha < |J|^{\aleph_0}}$ of torsion-free nesting groups.  Let $J_0 = J$.  If $J_{\alpha}$ has been defined for all $\alpha<\beta < |J|^{\aleph_0}$ and $\beta = \alpha + 1$ then let $J_{\beta} = E(J, J_{\alpha}, \sigma_{\alpha})$.  If $\beta$ is a limit ordinal then let $J_{\beta} = \bigcup_{\alpha < \beta} J_{\alpha}$.  Let $E(J)$ denote the union $\bigcup_{\alpha < |J|^{\aleph_0}} J_{\alpha}$.  The construction of the group $E(J)\geq J$ formally depended, of course, on the well ordering of the $\sigma$.  The order in which we took these HNN extensions actually does not make any difference up to the isomorphism class of $E(J)$, so the well ordering does not appear in the notation.

\end{construction}

\begin{lemma}\label{obvious}  The group $E(J)$ is locally indicable provided $J$ is.  If $|J| = |J|^{\aleph_0}$ then $|E(J)| = |J|$.
\end{lemma}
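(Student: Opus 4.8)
The plan is to verify the two assertions separately, each by transfinite induction along the chain $\{J_\alpha\}_{\alpha < |J|^{\aleph_0}}$ constructed in Construction \ref{beatdown}, using the propositions of Karrass and Solitar as the engine for the successor step.

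\emph{Local indicability.} Suppose $J$ is locally indicable; I will show each $J_\alpha$ is locally indicable by induction on $\alpha$, which gives the claim for $E(J) = \bigcup_\alpha J_\alpha$ since any finitely generated subgroup of $E(J)$ already sits inside some $J_\alpha$. The base case $J_0 = J$ is the hypothesis, and the limit case is immediate from the same finitely-generated-subgroup observation. For the successor step $J_{\beta} = E(J, J_\alpha, \sigma_\alpha)$, recall that $E(J, J_\alpha, \sigma_\alpha)$ is built in two stages of HNN extension: first $L_0 = (J_\alpha) *_{t_z \langle \sigma_\alpha(z)\rangle t_z^{-1} = \langle \sigma_\alpha(z+1)\rangle}$, then $E(J,J_\alpha,\sigma_\alpha) = L_0 *_{t \langle t_z\rangle t^{-1} = \langle t_{z+1}\rangle}$. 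The associated subgroups at the first stage are the cyclic groups $\langle \sigma_\alpha(z)\rangle$, so Proposition \ref{HNNlocallyindicable} applies directly to conclude $L_0$ is locally indicable (using that $J_\alpha$ is, by the inductive hypothesis). At the second stage the associated subgroups are the cyclic groups $\langle t_z\rangle$ inside the free group $F(\{t_z\}_{z\in\mathbb{Z}})$, which is a retract — hence in particular a subgroup — of $L_0$; so these are cyclic subgroups of $L_0$, and Proposition \ref{HNNlocallyindicable} applies again to give that $E(J,J_\alpha,\sigma_\alpha) = J_\beta$ is locally indicable. This completes the induction.

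\emph{Cardinality.} Assume now $|J| = |J|^{\aleph_0}$; write $\kappa = |J|$, so $\kappa$ is infinite and $\kappa^{\aleph_0} = \kappa$. I claim by induction that $|J_\alpha| = \kappa$ for all $\alpha < \kappa$, and hence $|E(J)| = |\bigcup_{\alpha<\kappa} J_\alpha| \le \kappa \cdot \kappa = \kappa$, while $E(J) \supseteq J$ forces $|E(J)| = \kappa$. The base case is the hypothesis. At a limit ordinal $\beta < \kappa$ we have $|J_\beta| \le \sum_{\alpha<\beta}|J_\alpha| \le |\beta|\cdot\kappa = \kappa$, and $\ge\kappa$ since $J_0 \subseteq J_\beta$. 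For the successor step, one checks that forming $L_0$ from $J_\alpha$ adds only countably many generators $t_z$ (and similarly one more generator $t$ at the second HNN stage), and an HNN extension of a group of infinite cardinality $\kappa$ by countably many stable letters has cardinality $\kappa \cdot \aleph_0 = \kappa$ (the underlying set can be presented via normal forms — finite words over $J_\alpha$ together with the new letters — so its size is bounded by $\kappa^{<\omega} = \kappa$). Applying this twice gives $|J_\beta| = \kappa$.

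\emph{Main obstacle.} The genuinely substantive point is the successor step of the local-indicability induction, specifically the application of Proposition \ref{HNNlocallyindicable} at the second HNN stage: one must be sure that the associated subgroups $\langle t_z\rangle$ really are \emph{cyclic subgroups of $L_0$}, i.e. that the $t_z$ generate a free subgroup of $L_0$ on which $\phi$ acts as the shift, which is exactly the content of the remark in Construction \ref{beatdown} that $F(\{t_z\}_{z\in\mathbb{Z}})$ is a retract subgroup of $L_0$; granting that, the Karrass–Solitar hypothesis ("each $A_i$ is cyclic") is met and the proposition fires. Everything else — the limit stages, the finitely-generated-subgroup reduction, and the cardinality bookkeeping — is routine. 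I would also note in passing that the order-independence of $E(J)$ up to isomorphism (asserted in Construction \ref{beatdown}) is not needed for this lemma, since the statement only concerns local indicability and cardinality, both of which are visibly insensitive to the enumeration of the $\sigma_\alpha$.
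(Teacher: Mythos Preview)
Your cardinality argument is correct and essentially matches the paper's. The local-indicability argument, however, has a genuine gap at the second HNN stage.

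The extension $E(M, J_\alpha, \sigma_\alpha) = L_0 *_{t\langle t_z\rangle t^{-1} = \langle t_{z+1}\rangle}$ has a \emph{single} stable letter $t$, and the relations $t t_z t^{-1} = t_{z+1}$ (ranging over all $z$) together say that conjugation by $t$ realizes the shift automorphism on the whole free subgroup $F(\{t_z\}_{z\in\mathbb{Z}}) \leq L_0$. The associated subgroup pair for this HNN extension is therefore $A = B = F(\{t_z\}_{z\in\mathbb{Z}})$, which is free of infinite rank and certainly not cyclic. Proposition~\ref{HNNlocallyindicable} is stated for a collection of isomorphisms $\phi_i: A_i \to B_i$ with one stable letter $t_i$ per isomorphism and each $A_i$ cyclic; reading the second stage as having cyclic associated subgroups $\langle t_z\rangle$ would require a separate stable letter for each $z$, which is not the construction. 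So Proposition~\ref{HNNlocallyindicable} does not fire here.

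This is exactly why the paper takes a longer route for the second stage. It extends the retraction $r: L_0 \to F(\{t_z\})$ to a retraction $r'$ of $E(M,J_\alpha,\sigma_\alpha)$ onto the subgroup generated by $F(\{t_z\})$ and $t$, observes that this image is locally indicable as an extension of a free group by $\langle t\rangle$, and then invokes Proposition~\ref{HNNsubgroups} (not \ref{HNNlocallyindicable}): since $\ker(r')$ is normal and meets $F(\{t_z\})$ trivially, it meets every conjugate of the associated subgroup trivially, so $\ker(r')$ is a free product of a free group with subgroups of $L_0$, hence locally indicable. The extension $1 \to \ker(r') \to E(M,J_\alpha,\sigma_\alpha) \to \operatorname{im}(r') \to 1$ then gives local indicability of $J_\beta$. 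Your ``main obstacle'' paragraph actually touches the right picture (you note that $\phi$ is the shift on the free subgroup) but then misidentifies the associated subgroups as the individual $\langle t_z\rangle$.
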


\begin{proof}  Since local indicability is preserved under infinite increasing unions, it suffices to show that if $J$ is a locally indicable group, $M \leq J$, and $\sigma: \mathbb{Z} \rightarrow M \setminus \{1\}$ then $E(M, J, \sigma)$ is also locally indicable (by induction).  To see that $E(M, J, \sigma)$ is locally indicable, we first notice that the extension $L_0$ defined in Construction \ref{beatdown} is locally indicable by Proposition \ref{HNNlocallyindicable}.  Next, we let $r: L_0 \rightarrow F(\{t_z\}_{z\in \mathbb{Z}})$ be the natural retraction.  This extends to a retraction $r' : E(M, J, \sigma) \rightarrow F(\{t_z\}_{z\in \mathbb{Z}}) *_{t\langle t_z\rangle t^{-1} = \langle t_{z + 1}\rangle}$ by mapping $t \mapsto t$ and $g \mapsto r(g)$ for $g\in L_0$.  The group $F(\{t_z\}_{z\in \mathbb{Z}}) *_{t\langle t_z\rangle t^{-1} = \langle t_{z + 1}\rangle}$ is a split extension $$1 \rightarrow F(\{t_z\}_{z\in \mathbb{Z}}) \rightarrow F(\{t_z\}_{z\in \mathbb{Z}}) *_{t\langle t_z \rangle t^{-1} = \langle t_{z + 1}\rangle} \rightarrow \langle t\rangle \rightarrow 1$$ and so $F(\{t_z\}_{z\in \mathbb{Z}}) *_{t\langle t_z\rangle t^{-1} = \langle t_{z + 1}\rangle}$ is locally indicable as an extension of two locally indicable groups.

For the kernel $\ker(r') \leq E(M, J, \sigma)$ it is clear that $\ker(r')\cap F(\{t_z\}_{z\in \mathbb{Z}}) *_{t\langle t_z\rangle t^{-1} = \langle t_{z + 1}\rangle}$ is trivial (since $r'$ is a retraction).  Thus more particularly $\ker(r') \cap  F(\{t_z\}_{z\in \mathbb{Z}})$ is trivial.  Since $\ker(r')$ is normal in $ E(M, J, \sigma)$ we know that $\ker(r')$ has trivial intersection with all conjugates in $E(M, J, \sigma)$ of the subgroup $F(\{t_z\}_{z\in \mathbb{Z}})$.  By Proposition \ref{HNNsubgroups} we have that $\ker(r')$ is a free product of a free group and groups which are isomorphic to subgroups of $L_0$.  Thus $\ker(r')$ is locally indicable as a free product of locally indicable groups.  Now $E(M, J, \sigma)$ is locally indicable as an extension $$1 \rightarrow \ker(r') \rightarrow E(M, J, \sigma) \rightarrow F(\{t_z\}_{z\in \mathbb{Z}}) *_{t\langle t_z\rangle t^{-1} = \langle t_{z + 1}\rangle} \rightarrow 1$$ of locally indicable groups.

Suppose that $|J| = |J|^{\aleph_0}$.  We have been assuming that $J$ is nontrivial and so $|J|$ is uncountable.  We see by induction that $|J| \leq|J_{\alpha}| \leq |J|^{\aleph_0}|J|^{\aleph_0} = |J|^{\aleph_0}$ for all $\alpha < |J|^{\aleph_0}$, and so $|E(J)| = |J|^{\aleph_0}$.
\end{proof}

We use a necessary and sufficient criterion for strong boundedness given by de Cornulier (see \cite[Proposition 2.7]{dC}):

\begin{proposition}\label{criterionforstrongboundedness}  A group $G$ is strongly bounded if and only for every function $\Lambda: G \rightarrow \omega$ such that for all $g, h\in G$ we have

\begin{itemize}

\item $\Lambda(1) \leq 1$;

\item $\Lambda(g) \leq \Lambda(g^{-1}) + 1$; and

\item $\Lambda(gh) \leq \max(\Lambda(g), \Lambda(h)) + 1$

\end{itemize}

\noindent there exists some bound $P \in \mathbb{N}$ for which $\Lambda(g)\leq P$ for all $g\in G$.

\end{proposition}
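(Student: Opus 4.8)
The plan is to prove each direction by constructing an explicit isometric action on a metric space.

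For sufficiency I would argue directly that orbits are bounded. Given an isometric action of $G$ on a metric space $(X,d)$ and a basepoint $x_0\in X$, set $\Lambda(g)=\lceil\log_2(1+d(x_0,gx_0))\rceil$. This is $\omega$-valued, $\Lambda(1)=0$, and $\Lambda(g)=\Lambda(g^{-1})$ because $d(x_0,g^{-1}x_0)=d(gx_0,x_0)$; the triangle inequality gives $1+d(x_0,ghx_0)\le(1+d(x_0,gx_0))+(1+d(x_0,hx_0))\le 2^{\Lambda(g)}+2^{\Lambda(h)}\le 2^{\max(\Lambda(g),\Lambda(h))+1}$, hence the third inequality. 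If every such $\Lambda$ is bounded, say by $P$, then $d(x_0,gx_0)\le 2^{P}-1$ for all $g$, so every orbit is bounded and $G$ is strongly bounded.

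For necessity, assume $G$ is strongly bounded and that $\Lambda$ satisfies the three inequalities; the goal is a uniform bound on $\Lambda$. Put $S_n=\{g:\Lambda(g)\le n\}\cup\{g:\Lambda(g^{-1})\le n\}\cup\{1\}$; then $1\in S_0$, each $S_n$ is symmetric, $S_n\subseteq S_{n+1}$, $\bigcup_nS_n=G$, and $\Lambda(s)\le n+1$ for $s\in S_n$ (by the first two inequalities). I would then form the connected weighted graph $X$ with vertex set $G$, placing between distinct $g,h$ an edge of length $2^{\mu(g,h)}$ where $\mu(g,h)=\min\{k:g^{-1}h\in S_k\}$ (well-defined since the $S_k$ are symmetric and exhaust $G$). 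Left translation is an isometric action of $G$ on the resulting path metric space $(G,d)$, and the orbit of $1$ is all of $G$; strong boundedness therefore gives a finite diameter $D$. Every $g$ is then joined to $1$ by a path of total length $\le D$, hence of at most $D$ edges each labelled by an element of $S_{\lfloor\log_2 D\rfloor}$, so $g=s_1\cdots s_m$ with $m\le D$ and $s_i\in S_{\lfloor\log_2 D\rfloor}$. A bisection induction on $m$ using only the third inequality — splitting $s_1\cdots s_m$ into two halves — yields $\Lambda(s_1\cdots s_m)\le(\lfloor\log_2 D\rfloor+1)+\lceil\log_2 m\rceil$, so $\Lambda(g)\le(\lfloor\log_2 D\rfloor+1)+\lceil\log_2 D\rceil$ for all $g$, as required.

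I expect the main obstacle to be the necessity direction: the space $X$ must be connected, so that strong boundedness funnels all of $G$ into one bounded orbit, yet coarsely controlled by $\Lambda$, so that the bound on distances pulls back to a bound on $\Lambda$. Weighting edges by powers of $2$ is what secures connectedness without destroying that control, and the estimate $\Lambda(s_1\cdots s_m)\le\Lambda_{\max}+\lceil\log_2 m\rceil$ — which is precisely where the $\max+1$ shape of the third hypothesis is used — is the technical crux. Sufficiency and the elementary closure properties of the $S_n$ are routine.
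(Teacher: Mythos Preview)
The paper does not prove this proposition at all: it is quoted verbatim from de~Cornulier and attributed as \cite[Proposition~2.7]{dC}, with no argument given. So there is nothing in the paper to compare your attempt against.

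That said, your argument is essentially correct and is along the lines of the standard proof. Both directions go through as written. A couple of cosmetic points you might tighten in the necessity direction: the diameter $D$ need not be an integer and the infimum defining the path metric need not be attained, so strictly speaking you only get, for each $g$, a path of total weight at most $D+1$; since every edge has weight at least $1$ this still bounds the number of edges by $\lfloor D\rfloor+1$ and forces each edge label into $S_{\lfloor\log_2(D+1)\rfloor}$, after which your bisection estimate applies with harmlessly adjusted constants. Also, your remark that the $2^k$ weighting ``secures connectedness'' slightly undersells what is going on: the graph you built is complete, hence trivially connected; the real work the weighting does is to make distances large enough that a diameter bound forces every $g$ into a bounded product of elements from a fixed $S_k$, which is exactly what feeds into the bisection step. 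None of this affects the validity of the proof.
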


\begin{proof}[Proof of Theorem \ref{locallyindicable}]  Let $K$ be a locally indicable group.  If $K$ is the trivial group then we let $G = K$ and we are done.  Suppose $K$ is nontrivial.  We can assume without loss of generality that $|K| = |K|^{\aleph_0}$ by replacing $K$ with the free product of $K$ with the free group of rank $|K|^{\aleph_0}$.  We define $G$ by an increasing nesting sequence $\{K_{\alpha}\}_{\alpha <\aleph_1}$ of supergroups of $K$.  Let $K_0 = K$.  If $K_{\alpha}$ has been defined for all $\alpha < \beta < \aleph_1$ and $\beta$ is a limit ordinal then let $K_{\beta} = \bigcup_{\alpha <\beta} K_{\alpha}$.  If $\beta = \alpha + 1$ then let $K_{\beta} = E(K_{\alpha})$.  Let $G = \bigcup_{\alpha <\aleph_1} K_{\alpha}$.  Notice that each $K_{\alpha}$ has cardinality $|K|^{\aleph_0}$.

We have $|K| = |K|^{\aleph_0} \leq |G| \leq \aleph_1 \cdot |K|^{\aleph_0}$, and so $G$ has the correct cardinality.  The group $G$ is also locally indicable as an increasing union of locally indicable groups $K_{\alpha}$ by Lemma \ref{obvious}.  That $G$ is simple follows from the fact that any two nontrivial elements are conjugate.  More particularly, given $g, h\in G\setminus \{1\}$ we select $\alpha < \aleph_1$ for which $g, h \in K_{\alpha}$, let $\sigma: \mathbb{Z} \rightarrow K_{\alpha}\setminus \{1\}$ be given by 
\begin{center}
$\sigma(n) = \begin{cases} g$ if $n\geq 0\\ h$ if $n<0 \end{cases}$.
\end{center}

\noindent When $\sigma$ appears in the definition of $J_{\alpha +1} = E(J_{\alpha})$ we produce an element $t_{-1}$ for which $t_{-1} h t_{-1}^{-1} = g$.

Finally we show that $G$ is strongly bounded.  The check will follow somewhat along the lines of the proof of \cite[Theorem 3.1]{dC}.  Suppose to the contrary, so that there exists an unbounded function $\Lambda: G \rightarrow \omega$ as in Proposition \ref{criterionforstrongboundedness}.  Select a sequence $\{g_n\}_{n\in \omega}$ of nontrivial elements in $G$ for which $\Lambda(g_n) \geq n^2$.  By how $G$ is constructed we may select $\alpha < \aleph_1$ for which $\{g_n\}_{n\in \omega} \subseteq K_{\alpha}$.  Let $\sigma: \mathbb{Z} \rightarrow K_{\alpha}\setminus \{1\}$ be given by

\begin{center}  $\sigma(z) = \begin{cases}g_n$ if $z = n\geq 0\\g_0 $ if $z<0\end{cases}$.
\end{center}

 In $K_{\alpha + 1} = E(K_{\alpha})$ there exists a collection of elements $\{t_z\}_{z\in \mathbb{Z}}$ and $t$ for which for all $z\in \mathbb{Z}$ we have $t_z\sigma(z)t_z^{-1} = \sigma(z + 1)$ and $tt_zt^{-1} = t_{z+1}$.  Select $M\in \omega$ large enough that $\Lambda(1), \Lambda(g_0), \Lambda(t_0), \Lambda(t_0^{-1}), \Lambda(t), \Lambda(t^{-1}) \leq M$.  We clearly have $\Lambda(t^{n}), \Lambda(t^{-n})\leq M + n$ for all $n\in \omega$.  Thus

\begin{center}  $\Lambda(t_n^{\pm 1}) = \Lambda(t^nt_0^{\pm 1}t^{-n}) \leq M + n + 2$
\end{center}

\noindent for all $n\in \omega$, from which we have by induction for $n \geq 1$ that

\begin{center}  $\Lambda(g_n) = \Lambda(t_{n-1}g_{n-1}t_{n-1}^{-1}) \leq M + 2n +1$

\end{center} 

\noindent and thus $n^2 \leq \Lambda(g_n)\leq M + 2n + 1$ for all $n\in\omega$, a contradiction.
\end{proof}

\end{section}

\begin{section}{Concluding Remarks}\label{concludingremarks}

Some questions remain regarding the set theoretic strength of the local-to-global bi-orderability theorem, which we'll denote $\LG$, mentioned in the introduction.  Since $\LG$ follows from the ultrafilter lemma, one naturally asks the following.

\begin{question}
Is $\LG$ strictly weaker than the ultrafilter lemma?  In other words, is there a model of $\ZF$ in which $\LG$ holds and the ultrafilter lemma is false?
\end{question}

\noindent Notice that $\LG$ implies the \emph{ordering principle} (every set can be given a total order).  To see, this one takes $X$ to be any set and considers the free group $\F(X)$.   Each finitely generated subgroup includes into some $\F(X')\leq \F(X)$ with $X' \subseteq X$ finite.  This $\F(X')$ is bi-orderable by applying the construction in \cite{M}, using the fact that $X'$ is finite and can therefore be given a total order.  Thus $\F(X)$ is locally bi-orderable, so $\F(X)$ is bi-orderable by $\LG$.  A bi-order restricts to a total order on the set $X$ of free generators.

\begin{question} Is $\LG$ strictly stronger than the ordering principle?
\end{question}

\noindent Since the ultrafilter lemma is strictly stronger than the ordering principle \cite{Mat}, the answer to at least one of the two above questions is ``yes''.

Finally, it would be interesting to produce an infinite bi-orderable strongly bounded group.  This task appears far more delicate than in the locally indicable case.  One must have unique root extraction in a bi-orderable group ($g^n = h^n$ implies $g = h$ whenever $n\geq 1$).  Our approach in the proof of Theorem \ref{locallyindicable} was quite heedless of such requirements on roots.

\end{section}

\section*{Acknowledgement}

The author is deeply grateful to Yago Antol\'in for introduction to, and many helpful conversations regarding, group orders.

\end{document}